\newtheorem{theorem}{Theorem}%[section]
\newtheorem{example}{Example}%[section]
\newtheorem{remark}[theorem]{Remark}%[section]
\newtheorem{lemma}[theorem]{Lemma}%[section]
\newtheorem{proposition}[theorem]{Proposition}%[section]
\newtheorem{corollary}[theorem]{Corollary}%[section]
\newtheorem{case}{Case}
\newtheorem{subcase}{Subcase}[case]
\numberwithin{equation}{section}
\def\Vir{\mathfrak{V}}
\def\Der{\operatorname{Der}}
\def\span{\operatorname{span}}
\newcommand{\C}{\ensuremath{\mathbb C}\xspace}
\renewcommand{\a}{\ensuremath{\alpha}}
\newcommand{\ord}{{\rm ord}}
\newcommand{\Z}{\ensuremath{\mathbb{Z}}\xspace}
\newcommand{\N}{\ensuremath{\mathbb{N}}\xspace}
\newcommand{\Ind}{\ensuremath{\operatorname{Ind}}\xspace}
\newcommand{\Soc}{\ensuremath{\operatorname{Soc}}\xspace}
\newcommand{\ann}{\operatorname{ann}\xspace}
\renewcommand{\phi}{\varphi}
\renewcommand{\geq}{\geqslant}
\begin{document}
\title[A family of simple weight Virasoro modules]{A family of simple weight modules over the Virasoro algebra}
\author{Rencai L{\"u} and Kaiming Zhao}
\date{Mar.3, 2013}
\maketitle

\begin{abstract} Using simple modules over  the derivation Lie algebra $\C[t]\frac{d}{d t}$
of the associative polynomial algebra $\C[t]$, we construct new
weight  Virasoro modules with all weight spaces infinite
dimensional. We determine necessary and sufficient conditions for
these new weight Virasoro modules to be simple, and determine
necessary and sufficient conditions for two such weight Virasoro
modules to be isomorphic. If such a weight Virasoro module is not
simple, we obtain all its submodules. In particular, we completely
determine the simplicity and the isomorphism classes of the weight
modules defined in \cite{CM} which are a small portion of the
modules constructed in this paper.
\end{abstract}

\vskip 10pt \noindent {\em Keywords:}  Virasoro algebra, weight
module

\vskip 5pt \noindent {\em 2000  Math. Subj. Class.:} 17B10, 17B20,
17B65, 17B66, 17B68

\vskip 10pt

\section{Introduction}
We denote by $\mathbb{Z}$, $\mathbb{Z}_+$, $\N$, and $\mathbb{C}$
the sets of  all integers, nonnegative integers, positive integers,
 and complex numbers, respectively. For a Lie algebra
$L$ we denote by $U(L)$ the universal enveloping algebra of $L$.

 The Virasoro algebra $\mathfrak{V}$ is the universal
central extension of the derivation algebra of the Laurent
polynomial algebra $\C[t,t^{-1}]$. More precisely, $\mathfrak{V}$ is
a Lie algebra over $\C$ with the basis
$$\{t^{n+1}\frac{d}{d t},z| n \in \Z\}$$
and subject to the Lie bracket
\begin{equation}[t^{n+1}\frac{d}{d t},t^{m+1}\frac{d}{d t}]=(m-n)t^{m+n+1}\frac{d}{d t}
+\delta_{n,-m}\frac{n^3-n}{12}z,\end{equation}
\begin{equation}[\mathfrak{V},z]=0. \end{equation}
Denote $d_n=t^{n+1}\frac{d}{d t }$. We will used both notations
according to contexts.

The Virasoro algebra is one of the most important Lie algebras both
in mathematics and in mathematical physics, see for example
\cite{KR,IK} and references therein. Its theory has been widely used
in many physics areas and other mathematical branches, for example,
quantum physics \cite{GO}, conformal field theory \cite{FMS},
Higher-dimensional WZW models \cite{IKUX, IKU}, Kac-Moody algebras
\cite{K, MoP}, vertex algebras \cite{LL}, and so on.

The representation theory on the Virasoro algebra has been
attracting a lot of attentions from mathematicians and physicists.
There are two classical families of simple Harish-Chandra
$\mathfrak{V}$-modules: highest weight modules (completely described
in \cite{FF}) and the so-called intermediate series modules. In
\cite{Mt} it is shown that these two families exhaust all simple
weight Harish-Chandra modules. In \cite{MZ1} it is even shown that
the above modules exhaust all simple weight modules admitting a
nonzero finite dimensional weight space.

Very naturally, the next important task is to study simple weight
modules with infinite dimensional weight spaces. The first such
examples were constructed by taking  the tensor product of some
highest weight modules and some intermediate series modules in
\cite{Zh} in 1997, and the necessary and sufficient conditions for
such tensor product to be simple were recently obtained in
\cite{CGZ}. Conley and Martin gave another class of such examples
with four parameters in \cite{CM} in 2001 where some sufficient
conditions were discussed for the modules to be simple. Then very
recently, a big class of  weight simple Virasoro modules were found
in \cite{LLZ}. We remark that the tensor products of intermediate
series modules over the Virasoro algebra never gives irreducible
modules \cite{Zk}. In this paper, we construct a family of weight
simple Virasoro modules which include all the modules defined in
\cite{CM, LLZ} as a small portion.

At the same time for the last decade, various other families of nonweight simple
$\mathfrak{V}$-modules were studied in \cite{OW,LGZ, LZ,
FJK,Ya,GLZ,OW2, MW, TZ}. These include   various versions of Whittaker
modules  constructed using different tricks. In particular, all the
above Whittaker modules and even more were described in a uniform
way in \cite{MZ3}.

To introduce the contents of the present paper we  need to   define
the following subalgebras of $\mathfrak{V}$ where $r\in\Z_+$:
\begin{equation}\mathfrak{W}=\Der(\C[t])=\span \{d_i|i\ge
-1\},\end{equation} \begin{equation}\mathfrak{b}=\span \{d_i |i\ge
0\},\end{equation}
\begin{equation}\mathfrak{V}^{(r)}=\span \{d_i|i\ge r\},\end{equation}
\begin{equation}\mathfrak{a}_r=\mathfrak{b}/\mathfrak{V}^{(r+1)}.\end{equation}
%\begin{center}$\tilde{\mathfrak{b}}=\mathfrak{b}+\C z, $\end{center}
 The Lie algebra $\mathfrak{W}$ is
usually called the Witt algebra of rank one. We denote by
$\mathcal{O}_{\mathfrak{W}}$ the category of all $\mathfrak{W}$-modules
$W$ satisfying

 {\bf Condition A:} For any
$w\in W$, there exists a nonnegative integer $n$ depending on $W$
such that $d_i w=0$ for all $i\ge n$.

 Similarly we may define the  categories $\mathcal{O}_{\mathfrak{V}}$, $\mathcal{O}_{\mathfrak{b}}$.
It is clear that  $\mathcal{O}_{\mathfrak{V}}$ consists of highest
weight modules and the ones define in \cite{MZ3}.

 The paper is organized as follows. In Sect.2, we determine all simple modules in
 $\mathcal{O}_{\mathfrak{b}}$ and all simple modules in
 $\mathcal{O}_{\mathfrak{W}}$. Actually, a simple modules in
 $\mathcal{O}_{\mathfrak{b}}$ is a simple module over $\mathfrak{a}_r$ for some $r\in\N$, and
 all nontrivial simple modules in
 $\mathcal{O}_{\mathfrak{W}}$ are induced modules from a simple module over $\mathfrak{a}_r$.
 In Sect.3, for any $W\in \mathcal{O}_{\mathfrak{W}}$, $a, b\in\C$ and  $\lambda\in \C^*$,
we define our weight Virasoro modules
$\mathcal{L}(W,\lambda,a,b)=W\otimes \C[t,t^{-1}]$, and   we prove
that all the Virasoro  modules $E_h(b,\gamma,p)$ defined in
\cite{CM} are only very special cases of the modules
$\mathcal{L}(W,\lambda,a,b)$. We also establish a powerful technique
for later use. In Sect.4 we determine necessary and sufficient
conditions for $\mathcal{L}(W,\lambda,a,b)$ to be simple. When it is
not simple we determine all it submodules. In Sect.5 we determine
necessary and sufficient conditions for two such simple Virasoro
modules to be isomorphic.  In Sect.6 we show that the simple
Virasoro weight modules are new.
The good presentation of the modules $\mathcal{L}(W,\lambda,a,b)$
and the powerful technique in Proposition \ref{Key-computation}
enable us to establish the results in this paper.

 \section{Simple modules in
 $\mathcal{O}_{\mathfrak{W}}$}

In this section, we determine all simple modules in
 $\mathcal{O}_{\mathfrak{W}}$.

Let $V$ be a module over a Lie algebra $L$. We say that  the $L$-module $V$ is  trivial if $L V=0$. Denote by $\Soc_{L}(V)$ the socle
of the $L$-module $V$, i.e., $\Soc_{L}(V)$ is the sum of the minimal
nonzero submodules of $V$.  For any $v\in V$, the annihilator of $v$
is defined as $\ann_{L}(v)=\{g\in L| g v=0\}$.

For any $\mathfrak{b}$-module $B$ and $0\ne v\in B$, define
$\ord_{\mathfrak{b}}(v)$, the order of $v$, to be  the minimal
nonnegative integer $r$ with $d_{r+i}v=0$ for all $i\ge 1$, or to be
$\infty$ if such $r$ doesn't exist. And $\ord_{\mathfrak{b}}(B)$,
the order of $B$, is defined to be the maximal order of all its
elements or $\infty$ if it doesn't exist.

\begin{lemma}\label{char-O(b)}Suppose that $B\in \mathcal{O}_{\mathfrak{b}}$ is
simple.\begin{enumerate}[$($a$).$]
 \item
$\ord_{\mathfrak{b}}(B)=\ord_{\mathfrak{b}}(v)<\infty,$ for all nonzero
$v\in B$.
 \item $\ord_{\mathfrak{b}}(B)=0$ if and only if $B$ is one dimensional.
 \item If $ B$ is nontrivial, then the action of $d_r$ on $B$ is
bijective, where $r=\ord_{\mathfrak{b}}(B)$.\end{enumerate}
Consequently, $B$ is  a simple $\mathfrak{a}_r$-module for some
$r\in\N$. \end{lemma}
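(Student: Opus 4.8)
The plan is to work with the filtration of $\mathfrak{b}$ by the subalgebras $\mathfrak{V}^{(r)}$ and exploit Condition A together with simplicity. Fix $0 \ne v \in B$. By Condition A there is some $n$ with $d_i v = 0$ for all $i \ge n$; in particular $\ord_{\mathfrak{b}}(v) < \infty$. The first key point is that $\ord_{\mathfrak{b}}(v)$ is independent of $v$. To see this, suppose $v, w$ are nonzero with $r = \ord_{\mathfrak{b}}(v)$. Since $B$ is simple, $w \in U(\mathfrak{b})v$, so $w$ is a sum of terms $d_{i_1}\cdots d_{i_k} v$. Using the bracket relation $[d_i, d_j] = (j - i)d_{i+j}$ one sees that if $d_{r+i}v = 0$ for all $i \ge 1$, then $d_{r+i}(U(\mathfrak{b})v) = 0$ for all $i \ge 1$ as well: acting by $d_{r+i}$ on a monomial and commuting it inward produces only $d_j$'s with $j \ge r+i > r$ hitting $v$ (which kill $v$) plus terms where the index increases, so everything vanishes. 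Hence $\ord_{\mathfrak{b}}(w) \le r$ for every $w$, i.e.\ $\ord_{\mathfrak{b}}(B) \le r$; applying this with the roles reversed (since also $v \in U(\mathfrak{b})w$) gives $\ord_{\mathfrak{b}}(w) = r$ for all nonzero $w$, which is part (a). Note this also shows $B$ is a module over $\mathfrak{b}/\mathfrak{V}^{(r+1)} = \mathfrak{a}_r$, giving the final ``Consequently'' clause once $r < \infty$ is established — which it is, by the argument above.

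For part (b): if $B$ is one-dimensional then every $d_i$ acts as a scalar, but $d_i = \frac{1}{j-i}[d_i, d_{i+j}]$ for suitable $j$ forces that scalar to be $0$ for $i \ge 1$ (commutators act as $0$ on a one-dimensional module), so $\ord_{\mathfrak{b}}(B) = 0$. Conversely if $r = \ord_{\mathfrak{b}}(B) = 0$, then $d_i B = 0$ for all $i \ge 1$, so $B$ is a module over $\mathfrak{b}/\mathfrak{V}^{(1)}$, which is the one-dimensional abelian Lie algebra spanned by the image of $d_0$; a simple module over it is one-dimensional.

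For part (c): assume $B$ is nontrivial with $r = \ord_{\mathfrak{b}}(B) \ge 1$. I want $d_r \colon B \to B$ bijective. For injectivity, let $K = \ker(d_r\,|\,B)$. I claim $K$ is a $\mathfrak{b}$-submodule: for $j \ge 1$, $d_r d_j v = d_j d_r v + [d_r, d_j]v = d_j d_r v + (j-r)d_{r+j}v$, and $d_{r+j}v = 0$ since $r + j > r = \ord_{\mathfrak{b}}(v)$; hence $d_r(d_j v) = d_j(d_r v) = 0$ when $v \in K$. For $j = 0$, $d_r d_0 v = d_0 d_r v + r d_r v = 0$ on $K$. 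So $K$ is a submodule, and by simplicity $K = 0$ or $K = B$; the latter would force $\ord_{\mathfrak{b}}(B) \le r - 1$, contradiction, so $d_r$ is injective. For surjectivity, $d_r B$ is likewise a $\mathfrak{b}$-submodule (the same bracket computation shows it is stable under each $d_j$), hence equals $B$ since it is nonzero (as $d_r$ is injective and $B \ne 0$). This completes (c).

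I expect the main obstacle to be the independence-of-order argument in part (a): one must be careful that commuting a ``killing'' operator $d_{r+i}$ through an arbitrary monomial in $U(\mathfrak{b})$ never produces a low-index generator that could fail to annihilate $v$. The bracket $[d_i, d_j] = (j-i)d_{i+j}$ only ever raises indices when both are $\ge -1$ and we start from index $\ge r+i$, so this works, but it should be stated cleanly (e.g.\ by induction on the length of the monomial, or by invoking that $\mathfrak{V}^{(r+1)}$ is an ideal-like subalgebra in the relevant sense: $[\mathfrak{b}, \mathfrak{V}^{(r+1)}] \subseteq \mathfrak{V}^{(r+1)}$ when... actually $[\mathfrak{b},\mathfrak{V}^{(s)}]\subseteq\mathfrak{V}^{(s)}$ only needs the generators of $\mathfrak b$ to have index $\ge 0$, which holds). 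The remaining steps are routine submodule-closure computations plus invocations of simplicity.
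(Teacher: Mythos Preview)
Your proposal is correct and follows essentially the same approach as the paper: use simplicity to show all nonzero vectors have the same (finite) order via $\mathfrak{V}^{(r+1)}$ being an ideal of $\mathfrak{b}$, then show $\ker d_r$ and $d_r B$ are submodules. Two minor slips to fix: in (b) the commutator identity you want is $d_i = \tfrac{1}{i}[d_0,d_i]$ (your formula $[d_i,d_{i+j}]$ gives $j\,d_{2i+j}$, not $d_i$), and in (c) the bracket is $[d_r,d_0] = -r\,d_r$, not $+r\,d_r$; neither affects the argument.
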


\begin{proof}For any nonzero $v,v'\in B$, since $B$ is simple, there exists some $u\in U(\mathfrak{b})$, such that $v'=uv$.
It is straightforward to check that $d_i v'=d_iuv=0$ for all
$i>\ord_{\mathfrak{b}}(v)$. So $\ord_{\mathfrak{b}}(v)\ge
\ord_{\mathfrak{b}}(v')$. Similarly we have
$\ord_{\mathfrak{b}}(v')\ge \ord_{\mathfrak{b}}(v)$. Thus
$\ord_{\mathfrak{b}}(v')=\ord_{\mathfrak{b}}(v)$. So we have proved
(a). Part (b) is trivial.

Now suppose that $ B$ is nontrivial, and $r=\ord_{\mathfrak{b}}(B)$.
Consider the subspace $X=\{v\in B|d_r v=0\}$ which is a proper
subspace of $B$. Then $X$ and $d_r(B)$ are $\mathfrak{b}$-submodules
of $B$. Since $B$ is simple, and $d_rB\ne0$ we deduce that $X=0$ and
$d_r(B)=B$, i.e., $d_r$ is bijective. Part (c) follows.
\end{proof}

\begin{lemma}\label{char-O(w)}Let $B\in
\mathcal{O}_{\mathfrak{b}}$, $W,W_1\in\mathcal{O}_{\mathfrak{W}}$ be
nontrivial simple modules. \begin{enumerate} \item
$\Ind_{\mathfrak{b}}^{\mathfrak{W}}(B)$ is a simple module in
$\mathcal{O}_{\mathfrak{W}}$;
\item $\Soc_{\mathfrak{b}}(W)$ is a simple $\mathfrak{b}$-module, and
an essential
$\mathfrak{b}$-submodule of $W$,
i.e. the intersection of all nonzero $\mathfrak{b}$-submodules
of $V$;
\item $W\cong\Ind_{\mathfrak{b}}^{\mathfrak{W}}\Soc_{\mathfrak{b}}(W),
B= \Soc_{\mathfrak{b}}(\Ind_{\mathfrak{b}}^{\mathfrak{W}}B)$;
\item $W\cong W_1$ if and only if $\Soc_{\mathfrak{b}}(W)\cong
\Soc_{\mathfrak{b}}(W_1)$.\end{enumerate}
Consequently, $W$ is the induced module from a simple $\mathfrak{a}_r$-module for some $r\in\N$.\end{lemma}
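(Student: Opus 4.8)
The plan is to prove the four statements in order, since each builds on the previous one, and then to read off the final "Consequently" from part (3) together with Lemma \ref{char-O(b)}.

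For (1), I would use the PBW theorem relative to the decomposition $\mathfrak{W}=\mathfrak{n}_-\oplus\mathfrak{b}$ where $\mathfrak{n}_-=\C d_{-1}$, so that $\Ind_{\mathfrak{b}}^{\mathfrak{W}}(B)=U(\mathfrak{W})\otimes_{U(\mathfrak{b})}B\cong \C[d_{-1}]\otimes B$ as a vector space, with a natural $\Z_{\geq0}$-filtration by powers of $d_{-1}$. Given a nonzero submodule $M$, take $0\neq m\in M$ of minimal $d_{-1}$-degree $k$, write $m=d_{-1}^k\otimes v+(\text{lower terms})$; I then need to show $v$ can be pushed to any prescribed nonzero vector of $B$ and that the $d_{-1}$-degree can be lowered to $0$. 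The key computation is that for $i\geq 0$ the operator $d_i$ acts on $d_{-1}^k\otimes v$ with leading term $d_{-1}^{k-?}\otimes(\text{something})$; more precisely, applying $d_1$ repeatedly lowers the degree, and since $B$ is a simple $\mathfrak{a}_r$-module with $d_r$ acting bijectively on it (Lemma \ref{char-O(b)}(c)), one can both strip off $d_{-1}$ and move $v$ around inside $B$. The upshot is $B\cap M\neq 0$, hence $B\subseteq M$ (simplicity of $B$), hence $\C[d_{-1}]\otimes B=M$. One also checks $\Ind_{\mathfrak{b}}^{\mathfrak{W}}(B)\in\mathcal{O}_{\mathfrak{W}}$ directly: $d_i(d_{-1}^n\otimes v)$ for large $i$ lands in lower-degree terms built from $d_{i-1}v,\dots$, all eventually zero by Condition A on $B$.

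For (2), given nonzero $\mathfrak{b}$-submodules $W',W''\subseteq W$ (which lie in $\mathcal{O}_{\mathfrak{b}}$ but need not be simple), I would show $W'\cap W''\neq 0$; this gives that the socle is "directed," so any two minimal submodules coincide and the socle is simple and essential. The argument: pick $0\neq w'\in W'$; since $W$ is a simple $\mathfrak{W}$-module, $U(\mathfrak{W})w'=W$, and using $U(\mathfrak{W})=U(\mathfrak{b})\C[d_{-1}]$ together with the fact that $d_{-1}$ acts locally nilpotently on $W$ (does it? — this needs care, see below), one descends from an arbitrary nonzero $w''\in W''$ back into $W'$. Concretely, I expect to show that for any nonzero $\mathfrak{b}$-submodule $W'$, the subspace $d_{-1}^{-?}W'$ — i.e. preimages under $d_{-1}$ — together with $W'$ generate a $\mathfrak{W}$-submodule, forcing it to be all of $W$; comparing with a second such submodule yields the intersection property. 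Existence of a minimal nonzero $\mathfrak{b}$-submodule follows because $\mathcal{O}_{\mathfrak{b}}$ is closed under submodules and orders are bounded on cyclic pieces, so one can find an irreducible $\mathfrak{b}$-subquotient and lift.

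For (3), the canonical map $\Ind_{\mathfrak{b}}^{\mathfrak{W}}\Soc_{\mathfrak{b}}(W)=U(\mathfrak{W})\otimes_{U(\mathfrak{b})}\Soc_{\mathfrak{b}}(W)\to W$ is $\mathfrak{W}$-linear and nonzero; its source is simple by (1) and (2), so the map is injective, and since $W$ is simple it is surjective, hence an isomorphism. The identity $B=\Soc_{\mathfrak{b}}(\Ind_{\mathfrak{b}}^{\mathfrak{W}}B)$ then follows by applying (2) to $\Ind_{\mathfrak{b}}^{\mathfrak{W}}B$: its socle is a simple $\mathfrak{b}$-submodule, it clearly contains the copy of $B$ sitting in degree $0$, and $B$ is already simple as a $\mathfrak{b}$-module, so they coincide. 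Part (4) is then formal: an isomorphism $W\to W_1$ of $\mathfrak{W}$-modules restricts to an isomorphism of $\mathfrak{b}$-socles, and conversely an isomorphism $\Soc_{\mathfrak{b}}(W)\to\Soc_{\mathfrak{b}}(W_1)$ induces, via $\Ind$ and (3), an isomorphism $W\to W_1$. Finally, by (3) and Lemma \ref{char-O(b)}, $W\cong\Ind_{\mathfrak{b}}^{\mathfrak{W}}(B)$ with $B=\Soc_{\mathfrak{b}}(W)$ a simple $\mathfrak{a}_r$-module for some $r\in\N$, which is the asserted description.

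The main obstacle is part (2), and within it the precise behaviour of $d_{-1}$: controlling how a nonzero $\mathfrak{b}$-submodule propagates under the action of $d_{-1}$ (and the bracket relations $[d_{-1},d_i]=(i+1)d_{i-1}$) to force intersections of submodules to be nonzero. The filtration/leading-term bookkeeping in part (1) is the other place where a genuine (if routine) computation is unavoidable; everything in (3) and (4) is then soft diagram-chasing.
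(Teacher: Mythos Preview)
Your outline for parts (1), (3), and (4) is essentially the paper's, and is fine. The real issue is part (2).

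First, your parenthetical worry is justified and in fact fatal to the route you sketch: $d_{-1}$ does \emph{not} act locally nilpotently on $W$. Indeed, once (3) is established one sees $W\cong \C[d_{-1}]\otimes_{\C} B$ with $d_{-1}$ acting freely, so any argument relying on local nilpotence of $d_{-1}$, or on ``preimages under $d_{-1}$'', cannot work. Likewise, your plan to produce a simple $\mathfrak{b}$-subquotient and then ``lift'' it to a simple submodule is not justified in this generality.

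The paper's approach to (2) avoids all of this by exploiting the order function $\ord_{\mathfrak{b}}$ from Lemma~\ref{char-O(b)} directly inside $W$, and in fact proves (2) and the first half of (3) in one stroke. Choose $0\neq w\in W$ of \emph{minimal} order $r$ and set $M=U(\mathfrak{b})w$; then $\ord_{\mathfrak{b}}(M)=r$ and, since $W$ is $\mathfrak{W}$-simple, $W=\C[d_{-1}]M$. The single computation
\[
d_{r+s}\Bigl(\sum_{i=0}^{s} d_{-1}^i w_i\Bigr)=(-1)^s(r+s+1)(r+s)\cdots(r+2)\,d_r w_s\in M\setminus\{0\}
\quad (w_i\in M,\ w_s\neq 0),
\]
together with $d_{r+i}v=0$ for $i>s$, shows simultaneously that (i) $\ord_{\mathfrak{b}}(v)=r+s$, so the $d_{-1}$-filtration is strict and $W\cong \C[d_{-1}]\otimes M\cong \Ind_{\mathfrak{b}}^{\mathfrak{W}}M$; (ii) every nonzero $\mathfrak{b}$-submodule of $W$ meets $M$ nontrivially, i.e.\ $M$ is essential; and (iii) $M$ must be $\mathfrak{b}$-simple, since a proper nonzero $\mathfrak{b}$-submodule of $M$ would induce a proper nonzero $\mathfrak{W}$-submodule of $W$. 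Thus $M=\Soc_{\mathfrak{b}}(W)$. This is exactly the same ``apply $d_{r+1}$ to drop the $d_{-1}$-degree'' idea you used in (1), transported into the unknown module $W$ via the order function; once you see that, (2) becomes no harder than (1).
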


\begin{proof}(1). Let $M$ be a nonzero submodule of $\Ind_{\mathfrak{b}}^{\mathfrak{W}}(B)=\C[d_{-1}]\otimes B$.
Choose $0\ne v=\sum_{i=0}^s d_{-1}^i\otimes v_i\in M$ with minimal
$s$, where $v_i\in B$. Denote $r=\ord_{\mathfrak{b}}(B)$. If $s>0$,
then $$0\ne d_{r+1} v\in -s(r+2)d_{-1}^{s-1}\otimes d_r
v_s+\sum_{i=0}^{s-2} d_{-1}^i \otimes B\subset M,$$ which
contradicts the minimality of $s$. So $s=0$, i.e., $v\in 1\otimes
B$. Therefore $M=\Ind_{\mathfrak{b}}^{\mathfrak{W}}(B)$, and
$\Ind_{\mathfrak{b}}^{\mathfrak{W}}(B)$ is simple.

(2). Fix some $0\ne w\in W$ with minimal $\ord_{\mathfrak{b}} w=r$.
Let $M=U({\mathfrak{b}})w$. Then $\ord_{\mathfrak{b}}M=r$, and
$W=\C[d_{-1}] M$. %From Part (1) we see that $M$ is simple over
%$\mathfrak{b}$, and $W=\C[d_{-1}]\otimes M$.
For any $v=\sum_{i=0}^s d_{-1}^i w_i\in W$ with $w_s\ne 0$ and
$w_i\in M$ for $i=0,\ldots,s$, we have
 \begin{equation}\label{essential}0\ne d_{r+s} v=(-1)^s(r+s+1)(r+s)\cdots(r+2)
 d_rw_s\in M,\end{equation} \begin{equation}d_{r+i} v=0,\forall
i>s.\end{equation} Thus $\ord_{\mathfrak{b}}(v)=r+s$. So $W\cong
\C[d_{-1}]\otimes M$ and $W\cong
\Ind_{\mathfrak{b}}^{\mathfrak{W}}M$. Since $W$ is a simple
$\mathfrak{W}$-module,  $M$ has to be simple as
$\mathfrak{b}$-module, and it is essential from (\ref{essential}).

Part (3) is an obvious consequence of (1) and (2). Part (4) follows
from (3). %Part (5) follows from (2) and (3).
\end{proof}

We remark that  a classification for all simple modules over $
\mathfrak{a}_1$ was given in \cite{Bl}, while  a classification for
all simple modules over $ \mathfrak{a}_2$ was recently obtained in
\cite{MZ3}. The problem is open for all other $(r+1)$-dimensional
Lie algebras  $ \mathfrak{a}_r$ for $r> 2$. However various simple
modules over $\mathfrak{a}_r$ were given in \cite{MZ3}.

\
\begin{example} Consider some $r\in\N$ and set
 $$\mu=(\mu_{r+1},\mu_{r+2},\dots,
\mu_{2r})\in\mathbb{C}^{r}.$$  Define the one dimensional
 $\Vir^{(r)}$ module $\C $ with the action

 $$d_i 1=0,\forall i>2r,$$

 $$d_k 1=\mu_k,\forall k=r+1,\ldots, 2r.$$

 Then we have the induced module $W_{\mu}=\Ind_{\Vir^{(r)}}^{\mathfrak{W}}
 \C$, which is simple if and only if $\mu_{2r}\ne 0$ or
 $\mu_{2r-1}\ne 0$. See \cite{LGZ} or \cite{MZ3} for more details.
\end{example}

\section{Constructing new Virasoro modules}

In this section, we will introduce our new weight Virasoro modules
to be studied in this paper. Then we provide our main technique for
later use.

\subsection{Constructing new Virasoro modules}

In this subsection we will provide a method to construct new weight
Virasoro modules from  modules in $\mathcal{O}_{\mathfrak{W}}$.

\

Let $W\in \mathcal{O}_{\mathfrak{W}}$. Then $W$
 can be naturally regarded as a  module over $\C[[t]]\frac{d}{d t}$.
 Regard $\C[e^{\pm t}]\frac{d}{ d t}$ as a subalgebra of $\C[[t]]\frac{d}{d
 t}$. We will use the expression
 $$e^{m t}=\sum_{k=0}^{\infty}\frac{(m t)^k}{k!}\in\C[[t]], \,\forall\, m\in\Z.$$
 In particular, we have
 $$[e^{mt}\frac{d}{d t}, e^{nt}\frac{d}{d t}]=(n-m)e^{(m+n)t}\frac{d}{d t},$$
$$[e^{mt}\frac{d}{d t}, \frac{d}{d
t}]=-me^{mt}\frac{d}{d t}, $$
 and $$(e^{mt}\frac{d}{d t})( \frac{d}{d t})=(e^{mt}\frac{d}{d
t})(\frac{d}{d t}-m),\,\forall\, m,n\in\Z,$$ in $U(\mathfrak{V})$.
Now we can give the weight modules defined and studied in this
paper.

 \begin{lemma}\label{w[lambda]} For any $W\in \mathcal{O}_{\mathfrak{W}}$, $a, b\in\C$ and  $\lambda\in \C^*$,
 the vector space $\mathcal{L}(W,\lambda,a,b)=W\otimes \C[t,t^{-1}]$ becomes a Virasoro module
 with the action
\begin{equation}z\cdot (w\otimes t^j)=0,\end{equation}
\begin{equation}d_k\cdot (w\otimes t^j)=((\lambda^k e^{kt}-1)\frac{d}{d t}+a+kb+j)w\otimes t^{k+j},\end{equation} for all
$k,j\in\Z, w\in W$.
\end{lemma}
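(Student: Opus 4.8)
The plan is to verify directly that the proposed formulas define a Lie algebra action, i.e.\ that the bracket relations of $\mathfrak{V}$ are respected. Since $z$ acts as zero and the central term in $[d_n,d_m]$ will turn out to play no role (the module is ``non-projective'' in the cocycle sense), the real content is checking
\[
[d_k,d_l]\cdot(w\otimes t^j)=(l-k)\,d_{k+l}\cdot(w\otimes t^j)
\]
for all $k,l,j\in\Z$ and $w\in W$. Write $D=\frac{d}{d t}$ and, for each $k\in\Z$, set $f_k=\lambda^k e^{kt}-1\in\C[[t]]$, so that $d_k$ acts as $(f_k D+a+kb+j)$ on the $t^{k+j}$-graded component, with the understanding that the scalar $j$ increases by $k$ after applying $d_k$ (it is a weight module with $d_k$ raising the $t$-degree by $k$). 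The operator $f_kD$ is exactly the action of the element $\lambda^k e^{kt}D - D \in \C[[t]]D$ coming from the $\mathfrak{W}$-module structure on $W$ extended to $\C[[t]]D$ as indicated in the paragraph before the lemma; here one uses that $W\in\mathcal{O}_{\mathfrak{W}}$ so that $e^{kt}D=\sum_{m\ge 0}\frac{k^m}{m!}t^m D = \sum_{m\ge -1}\frac{k^{m+1}}{(m+1)!}d_m$ acts as a well-defined (locally finite) operator on $W$ by Condition A.

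The key computation proceeds in two steps. First, I would compute the commutator of the two ``vector field'' parts: using the identity $[e^{kt}D,e^{lt}D]=(l-k)e^{(k+l)t}D$ recorded in the excerpt (valid in $U(\mathfrak{V})$, hence on $W$), together with $[D,e^{lt}D]=-le^{lt}D$ and bilinearity, one gets $[f_kD,f_lD]=(l-k)f_{k+l}D + (\text{correction terms linear in }D)$. Precisely, expanding $f_k=\lambda^k e^{kt}-1$ and $f_l=\lambda^l e^{lt}-1$, the cross terms produce $-\lambda^k(e^{kt}D)D\cdot(\text{from }-1) + \lambda^l D(e^{lt}D) + \dots$, which collapse using the displayed identity $(e^{kt}D)(D)=(e^{kt}D)(D-k)$, i.e.\ $[D,e^{kt}D]=k e^{kt}D$ up to sign; I would track these carefully so that everything beyond $(l-k)f_{k+l}D$ is a multiple of $e^{kt}D$, $e^{lt}D$, and $D$ with coefficients that will be matched against the scalar ($a+kb+\cdots$) contributions. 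Second, I would incorporate the scalar parts $a+kb+j$: applying $d_l$ then $d_k$ shifts $j\mapsto j+l\mapsto j+l+k$ and multiplies by $(a+lb+j)(a+kb+j+l)$ plus the interaction of the scalar of $d_k$ with the operator part of $d_l$ (which is just multiplication, so it commutes) and the interaction of the operator $f_lD$ acting on $t$-degrees when computing $d_k$ — but since $d_k$'s scalar $a+kb+j$ depends on the \emph{current} degree, applying $f_lD$ does not change the degree, so no extra term arises there. The antisymmetrization $d_kd_l-d_ld_k$ then yields $(l-k)$ times the scalar $a+(k+l)b+j$ of $d_{k+l}$, provided the $b$-linear terms cancel correctly — this is where the specific coefficient $kb$ (as opposed to an arbitrary function of $k$) is forced, and it is the one genuinely delicate bookkeeping point.

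I expect the main obstacle to be precisely this bookkeeping: organizing the correction terms from $[f_kD,f_lD]$ (which are honest operators $\lambda^k e^{kt}D$, $\lambda^l e^{lt}D$, $\pm kD$, etc.) so that they exactly cancel against the terms where the scalar part of one generator meets the exponential part of the other. The cleanest route is probably to first prove the lemma in the special case $a=b=0$ and $W$ a free-of-issues module, establishing
\[
\bigl[(\lambda^k e^{kt}-1)D,\ (\lambda^l e^{lt}-1)D\bigr] \;=\; (l-k)(\lambda^{k+l}e^{(k+l)t}-1)D
\]
as an identity of operators on any $W\in\mathcal{O}_{\mathfrak{W}}$ — note the right side is \emph{exactly} $(l-k)$ times the operator part of $d_{k+l}$, which strongly suggests the algebra $\span\{(\lambda^k e^{kt}-1)D : k\in\Z\}$ is isomorphic to $\mathfrak{W}$ (indeed it sits inside $\C[e^{\pm t},t]D$), so this step is really just the bracket relation in that Lie algebra pulled back to $W$. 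Once that operator identity is in hand, reintroducing the scalars $a+kb+j$ and $t^j$ is a short direct check: the degree-shift accounts for the $j$-dependence, and a one-line computation with $\delta_{n,-m}$-type terms confirms the central element $z$ may indeed act as zero (the would-be cocycle $\delta_{k,-l}\frac{k^3-k}{12}$ term has no place to land and its absence is consistent). I would then remark that the formula makes sense because Condition A guarantees $(\lambda^k e^{kt}-1)D$ acts locally finitely, so all infinite sums are really finite on each vector.
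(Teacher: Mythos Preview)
Your overall strategy---direct verification that the bracket relations hold---is the same as the paper's. But the ``cleanest route'' you propose in your last paragraph contains a genuine error: the operator identity
\[
\bigl[(\lambda^k e^{kt}-1)D,\ (\lambda^l e^{lt}-1)D\bigr]=(l-k)(\lambda^{k+l}e^{(k+l)t}-1)D
\]
is \emph{false}. Using $[gD,hD]=(gh'-hg')D$ one finds
\[
[f_kD,\,f_lD]=(l-k)\lambda^{k+l}e^{(k+l)t}D\;+\;k\lambda^ke^{kt}D\;-\;l\lambda^le^{lt}D,
\]
which differs from $(l-k)f_{k+l}D$ by $k\lambda^ke^{kt}D-l\lambda^le^{lt}D+(l-k)D\neq 0$. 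In particular the span of the $f_kD$ is \emph{not} a Lie subalgebra of $\C[[t]]D$, so there is no copy of the centerless Virasoro sitting there to invoke.

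What actually closes the bracket is precisely what your first paragraph hints at but your third paragraph throws away: the interaction between the operator part $f_kD$ and the grading scalar $j$. Even with $a=b=0$, the action of $d_k$ on $w\otimes t^j$ carries the term $+j$, and antisymmetrizing the scalar--operator cross pieces contributes exactly $l f_lD-k f_kD$; expanding, this is $l\lambda^le^{lt}D-k\lambda^ke^{kt}D-(l-k)D$, which cancels the unwanted terms above and leaves $(l-k)f_{k+l}D$. So the degree shift $j\mapsto j+k$ is not bookkeeping to be layered on afterward---it is essential to the identity. The paper's proof avoids this trap by computing $d_md_n\cdot(v\otimes t^j)$ as a single expression (keeping the scalar $a+mb+j$ together with the exponential operator throughout), reorganizing it into a form symmetric in $m,n$ plus explicit remainder, and then subtracting the $m\leftrightarrow n$ swap. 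I recommend you follow that pattern rather than trying to separate the operator and scalar contributions.
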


\begin{proof} %It is straightforward to check that $W$ becomes a $\Vir$ module with the new action
%$d_m w=e^{mt}\frac{d}{d t} w,\forall m\in \Z, w\in W$. Now it is
%clear that $\mathcal{L}(W,\lambda,a,b)=\mathcal{N}(W,\lambda,a,b)$.
We only verify that $[d_m,d_n]\cdot (v\otimes
t^j)=(d_md_n-d_nd_m)\cdot (v\otimes t^j)$
for all $m,n,j\in\Z$ and $v\in W$, while other relations are
obvious. We compute (for simplicity, we will denote $\mu=a+j$)
$$\aligned  &d_md_n\cdot (v\otimes t^j)\\
=&d_m\cdot ((\lambda^n e^{nt}-1)\frac{d}{d t}+\mu+nb)v\otimes
t^{j+n})\endaligned $$ $$\aligned =&(\lambda^m e^{mt}-1)\frac{d}{d
t}+\mu+mb+n)((\lambda^n e^{nt}-1)\frac{d}{d t}+\mu+nb)v\otimes
t^{j+m+n}\\=&(\lambda^{m+n}(e^{mt}\frac{d}{d t})(e^{nt}\frac{d}{d
t})-\lambda^m(e^{mt}\frac{d}{d t})(\frac{d}{d t}-\mu-nb)
\\ &-(\frac{d}{d t}-\mu-mb-n)\lambda^n(e^{nt}\frac{d}{d
t}))v\otimes t^{m+n+j}\\
&+(\frac{d}{d t}-\mu-mb-n)(\frac{d}{d t}-\mu-nb)v\otimes
t^{m+n+j}\endaligned$$
$$\aligned =&(\lambda^{m+n}(e^{mt}\frac{d}{d
t})(e^{nt}\frac{d}{d t})-\lambda^m(e^{mt}\frac{d}{d t})(\frac{d}{d
t}-\mu-nb)\\ &\hskip 1cm -\lambda^n(e^{nt}\frac{d}{d t})(\frac{d}{d
t}-\mu-mb))v\otimes t^{m+n+j}\\
&+(\frac{d}{d t}-\mu-mb)(\frac{d}{d t}-\mu-nb)-n(\frac{d}{d
t}-\mu)+n^2b)v\otimes t^{m+n+j}.
\endaligned$$ %where we have used $(d_0d_n-d_nd_0)v=nd_nv$.
Using the above formula, we deduce that
$$\aligned &(d_md_n-d_nd_m)\cdot (v\otimes
t^j)\\=&(\lambda^{m+n}[e^{mt}\frac{d}{d t},e^{nt}\frac{d}{d
t}]-(n-m)(\frac{d}{d t}-\mu)+(n^2-m^2)b)v\otimes
t^{m+n+j}\\=&(n-m)(\lambda^{m+n}e^{(m+n)t}\frac{d}{d t}-\frac{d}{d
t}+a+j+(m+n)b)v\otimes t^{m+n+j}
\\=&(n-m)d_{m+n}\cdot (v\otimes
t^j)\\=& [d_m,d_n]\cdot (v\otimes t^j).\endaligned$$
\end{proof}

From (3.2) we know that, if $M$ is infinite dimensional, then the
module $\mathcal{L}(W,\lambda,a,b)$ is a weight Virasoro module with
infinite dimensional weight spaces $\mathcal {L}_{\a+n}=W\otimes
t^n$ where
$$\mathcal {L}_{\a+n}=\{v\in \mathcal{L}(W,\lambda,a,b)\,\,|\,\,d_0v=(\a+n)v\}.$$

 \begin{example} When   $W=\C v$ is the trivial
 $\mathfrak{W}$-module, the weight $\Vir$-module $\mathcal{L}(W,\lambda,a,b)=v \otimes
\C[t,t^{-1}]$ is determined by
\begin{equation} d_m v_n=(a+n+bm)v_{n+m},\end{equation}
where $v_n=v\otimes t^n$, which is exactly the module $A_{a,b}$ of
intermediate series (see \cite{KR}).
\end{example}

\subsection{Realizing Virasoro  modules $E_h(b,\gamma,p)$ defined in \cite{CM}}
Let $W$ be the Verma $\mathfrak{W}$-module with the highest weight
vector $w_0$ of highest weight $b'\in\C$, i.e., $d_0 w_0=b' w_0$ and
$d_iw_0=0$ for all $i>0$. For any $a, b\in \C, \lambda\in \C^*$, we
have the weight $\Vir$-module $\mathcal{L}(W,\lambda,a,b)=W \otimes
\C[t,t^{-1}]$ with the action

 $$\aligned &z=0, \\&
d_n\cdot (f(d_{-1}) w_0\otimes t^i)\\=&((\lambda^n
e^{nt}-1)\frac{d}{d t}+a+nb+i)f(d_{-1})w_0\otimes
t^{n+i}\\=&(\lambda^nf(d_{-1}-n)e^{nt}\frac{d}{d
t}-f(d_{-1})d_{-1}+(a+nb+i)f(d_{-1}))w_0\otimes t^{n+i}\\=&
(\lambda^nf(d_{-1}-n)(nb'+d_{-1})-f(d_{-1})d_{-1}+(a+nb+i)f(d_{-1}))w_0\otimes
t^{n+i}\endaligned$$ for all $n, i\in \Z$, $f(t)\in \C[t]$. In
particular,
$$\aligned &d_n\cdot (d_{-1}^k w_0 \otimes
t^i)\\=&(\lambda^n(d_{-1}-n)^k(nb'+d_{-1})-d_{-1}^{k+1}+(a+nb+i)d_{-1}^k)
w_0\otimes t^{i+n},\endaligned$$ for  all $i,n\in \Z$ and $k\in
\Z_+$. For $k\in\Z_+, i\in\Z$, if we denote
$T_i^k=(-1)^kd_{-1}^kw_0\otimes t^i$, then
$$\aligned \ &d_n\cdot T_i^k=\lambda^n(n-d_{-1})^k(nb'+d_{-1})w_0\otimes
t^{i+n}+T_{i+n}^{k+1}+(a+nb+i)T_{i+n}^k\\
=&\left(\lambda^n(nb'+d_{-1})\sum_{j=0}^k
  {k\choose j}n^{k-j}(-d_{-1})^j\right )w_0\otimes t^{i+n}\\ &\hskip 2cm +T_{i+n}^{k+1}+(a+nb+i)T_{i+n}^k
\\=&\lambda^nnb'\sum_{j=0}^{k-1}{k\choose j}n^{k-j}T_{i+n}^j-\lambda^n\sum_{j=0}^{k-2}
{k\choose j}n^{k-j}T_{i+n}^{j+1}\\ &\hskip 2cm +(1-\lambda^n)T_{i+n}^{k+1}+(a+nb+i+\lambda^nnb'-\lambda^nnk)T_{i+n}^k.\\
  \endaligned$$
Comparing this action with the one in Lemma 2.1 of \cite{CM}, we see
that $\mathcal{L}(W,\lambda,a,b)$ is exactly the module
$E_h(a,-b',b+b')$ defined in \cite{CM} with $\lambda=e^h$ and
$\mu=a+i$. The main results in \cite{CM} are discussions on some
sufficient conditions for $E_h(a,-b',b+b')$ to be simple.

\subsection{Some properties of the $\C[x]$-module ${\mathcal
M}(\Z,\C)$} We are going to end this section with an important
result of computations, which  will be  frequently used later. This
technique is crucial to this paper.

 Let $P$ be any vector
space over $\C$. Denote by ${\mathcal M}(\Z,P)$ the set of all maps
from $\Z$ to $P$, which  naturally becomes a vector space over $\C$.

It is easy to see that ${\mathcal M}(\Z,P)$ becomes a module over
the polynomial algebra $\C[x]$ by the action
\begin{equation}\label{act-c[x]}( x^i \cdot T)(m)=T(m+i), \forall m\in \Z, T\in
{\mathcal M}(\Z,P).\end{equation}

Now we consider the infinite dimensional vector space ${\mathcal
M}(\Z,\C)$. For any $\lambda\in\C^*$ and $k\in \N$,  $\lambda^m m^k$
is regarded as an element in ${\mathcal M}(\Z,\C)$ by mapping $m\in
\Z$ to $\lambda^m m^k$. In particular, $x\cdot \lambda^m
m^k=\lambda^{m+1} (m+1)^k$.

 \begin{lemma}\label{Z-C} Let $\lambda\in \C^*$,  $k\in \N$, $p(x)\in \C[x]$ and $\lambda^m
 m^k\in {\mathcal M}(\Z,\C)$.

 (a) We have $$\aligned &(x-\lambda)^k \cdot (\lambda^m m^k)
 =k!\lambda^{m+k},\\ &(x-\lambda)^{k+1+i} \cdot (\lambda^m
 m^k)=0,\,\,\forall\,\,i\in \Z_+;\endaligned$$

 (b) $(p(x)(x-\lambda)^k)\cdot (\lambda^m
 m^k)=k!\lambda^{m+k}p(\lambda)$.

 (c) $p(x)\cdot (\lambda^m m^k)=0\in {\mathcal M}(\Z,\C)$ if and only if $(x-\lambda)^{k+1}|
 f(x)$.
 \end{lemma}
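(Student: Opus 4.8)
The plan is to establish the three parts of Lemma~\ref{Z-C} in order, since (b) follows from (a) by linearity in $p$, and (c) follows by combining (a) with a dimension/triangularity argument.

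First I would prove (a) by a direct induction on $k$. The base case $k=0$ is the identity $(x-\lambda)\cdot(\lambda^m) = \lambda^{m+1} - \lambda\cdot\lambda^m = 0$, which also disposes of the second formula in (a) when $k=0$. For the inductive step, observe that
\begin{equation}
(x-\lambda)\cdot(\lambda^m m^k) = \lambda^{m+1}(m+1)^k - \lambda^{m+1}m^k = \lambda\cdot\lambda^m\bigl((m+1)^k - m^k\bigr),
\end{equation}
and $(m+1)^k - m^k = k m^{k-1} + (\text{lower order terms in } m)$. Thus $(x-\lambda)\cdot(\lambda^m m^k)$ is $\lambda$ times a $\C$-linear combination of $\lambda^m m^j$ with $j\le k-1$, in which the coefficient of $\lambda^m m^{k-1}$ is exactly $k\lambda$. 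Applying $(x-\lambda)^{k}$ to this and using the inductive hypothesis (the terms $\lambda^m m^j$ with $j<k-1$ are killed already by $(x-\lambda)^{j+1}$, hence by $(x-\lambda)^k$, while $\lambda^m m^{k-1}$ is sent to $(k-1)!\lambda^{m+k-1}$) gives $(x-\lambda)^{k+1}\cdot(\lambda^m m^k) = k\lambda\cdot (k-1)!\,\lambda^{m+k-1}\cdot\lambda^{?}$; keeping careful track of the powers of $\lambda$ yields $k!\,\lambda^{m+k+1}$ after one more application, but in fact I want to phrase the induction so that it directly outputs $(x-\lambda)^k\cdot(\lambda^m m^k)=k!\,\lambda^{m+k}$. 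The cleanest bookkeeping is: write $(x-\lambda)\cdot(\lambda^m m^k) = \lambda^{m+1}\bigl(k m^{k-1} + q(m)\bigr)$ with $\deg q < k-1$, so that as an element of $\mathcal M(\Z,\C)$ this equals $\lambda\cdot\bigl(k\,\lambda^m m^{k-1} + \lambda^m q(m)\bigr)$; now $(x-\lambda)^{k-1}$ annihilates $\lambda^m q(m)$ (since $\deg q \le k-2$, apply (a) for the smaller exponent) and sends $\lambda^m m^{k-1}$ to $(k-1)!\lambda^{m+k-1}$, giving $(x-\lambda)^k\cdot(\lambda^m m^k) = \lambda\cdot k\cdot(k-1)!\,\lambda^{m+k-1} = k!\,\lambda^{m+k}$. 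The second formula of (a), that $(x-\lambda)^{k+1+i}$ annihilates $\lambda^m m^k$, is then immediate: $(x-\lambda)^{k+1}\cdot(\lambda^m m^k) = (x-\lambda)\cdot(k!\lambda^{m+k}) = 0$ by the $k=0$ case, and higher powers act trivially afterward.

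Part (b) follows from (a) together with bilinearity. Write $p(x) = \sum_j c_j (x-\lambda)^j$, the Taylor expansion of $p$ about $\lambda$, so $c_0 = p(\lambda)$. Then $p(x)(x-\lambda)^k = \sum_j c_j (x-\lambda)^{k+j}$, and applying this to $\lambda^m m^k$, every term with $j\ge 1$ dies by the second formula in (a), while the $j=0$ term gives $c_0 \cdot k!\lambda^{m+k} = k!\,\lambda^{m+k}p(\lambda)$. That is exactly the claim. (Here I use that the $\C[x]$-action of a product is the composite of the actions, which is clear from the definition~\eqref{act-c[x]}.)

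For part (c): the "if" direction is immediate from (a), since if $p(x) = (x-\lambda)^{k+1}g(x)$ then $p(x)\cdot(\lambda^m m^k) = g(x)\cdot\bigl((x-\lambda)^{k+1}\cdot(\lambda^m m^k)\bigr) = g(x)\cdot 0 = 0$. For the "only if" direction, suppose $(x-\lambda)^{k+1}\nmid p(x)$; I must show $p(x)\cdot(\lambda^m m^k)\ne 0$ in $\mathcal M(\Z,\C)$. Let $j\le k$ be the order of vanishing of $p$ at $\lambda$, so $p(x) = (x-\lambda)^j h(x)$ with $h(\lambda)\ne 0$. Then $p(x)\cdot(\lambda^m m^k) = h(x)(x-\lambda)^j\cdot(\lambda^m m^k)$. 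Now $(x-\lambda)^j\cdot(\lambda^m m^k)$, by the computation in the induction for (a), equals $\lambda^{?}$ times $\lambda^m$ times a polynomial in $m$ of degree exactly $k-j$ with nonzero leading coefficient; more precisely it is a nonzero scalar multiple of $\lambda^m m^{k-j}$ plus lower-degree terms (all times the same power of $\lambda$). So it suffices to show: for any $0\le\ell$ and any $h$ with $h(\lambda)\ne 0$, $h(x)\cdot(\lambda^m m^{\ell} + \text{lower order})$ is a nonzero element of $\mathcal M(\Z,\C)$. But $h(x)$ acts on the span of $\{\lambda^m m^0,\dots,\lambda^m m^{\ell}\}$ by an upper-triangular operator whose diagonal entries are all $h(\lambda)\ne 0$ (since $x\cdot(\lambda^m m^i) = \lambda^{m+1}(m+1)^i = \lambda\cdot\lambda^m m^i + (\text{lower})$, so $x$ acts as $\lambda\cdot\mathrm{Id} + (\text{strictly upper triangular})$, hence $h(x)$ acts as $h(\lambda)\cdot\mathrm{Id} + (\text{strictly upper triangular})$), so it is invertible on that finite-dimensional space and cannot send the nonzero vector $\lambda^m m^{\ell}+\cdots$ to zero. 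The main obstacle is precisely this last triangularity argument — making rigorous that the elements $\lambda^m m^0, \lambda^m m^1, \dots$ are linearly independent in $\mathcal M(\Z,\C)$ (which follows because a nonzero polynomial has only finitely many roots, so $\lambda^m p(m) = 0$ for all $m\in\Z$ forces $p=0$) and that the relevant operators are genuinely triangular in this basis; once that is in place, (c) drops out.
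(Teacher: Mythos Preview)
Your arguments for (a) and (b) are correct and essentially identical to the paper's: the same one-step expansion $(x-\lambda)\cdot(\lambda^m m^k)=\lambda^{m+1}\bigl(km^{k-1}+\text{lower}\bigr)$ followed by induction, and for (b) the paper uses the single division $p(x)=q(x)(x-\lambda)+p(\lambda)$ where you use the full Taylor expansion, which amounts to the same thing.

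For (c) your triangularity argument is correct, but the paper takes a shorter route that you may find worth knowing. Rather than factoring $p(x)=(x-\lambda)^j h(x)$ and analysing the action of $h(x)$ on the filtered space $\operatorname{span}\{\lambda^m m^0,\dots,\lambda^m m^{k-j}\}$, the paper simply observes that $\ann_{\C[x]}(\lambda^m m^k)$ is an ideal of the principal ideal domain $\C[x]$; part (a) shows $(x-\lambda)^{k+1}$ lies in this ideal while $(x-\lambda)^k$ does not, and these two facts alone force $\ann_{\C[x]}(\lambda^m m^k)=(x-\lambda)^{k+1}\C[x]$. This bypasses the need to verify linear independence of the $\lambda^m m^i$ and the upper-triangularity of $h(x)$ explicitly. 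Your approach has the virtue of being entirely self-contained and concrete (and it makes transparent exactly how $\C[x]$ acts on the flag $\{V_\ell\}$, which is morally what underlies Proposition~\ref{Key-computation}), while the paper's approach is quicker and exploits the algebraic structure more efficiently.
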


 \begin{proof} (a). We compute $$(x-\lambda)^k\cdot (\lambda^m m^k)=(x-\lambda)^{k-1}(x-\lambda)\cdot (\lambda^m
 m^k)$$
 $$=(x-\lambda)^{k-1}(x\cdot (\lambda^m
 m^k)-\lambda \lambda^m
 m^k)$$
$$=(x-\lambda)^{k-1}(\lambda^{m+1}
 (m+1)^k-\lambda^{m+1}
 m^k)$$
$$=(x-\lambda)^{k-1}(\lambda^{m+1}(k
 m^{k-1}+{\text{ lower terms of $m$}}))$$
 $$=...=k!\lambda^{m+k}.$$
 The second formula follows after applying another $(x-\lambda)$.

Part (b) follows from the fact that
$p(x)=q(x)(x-\lambda)+p(\lambda)$ for some $q(x)\in\C[x]$ by using
(a).

 (c). Note that $\ann_{\C[x]}(\lambda^m m^k)=\{g(x)\in \C[x]| p(x)\cdot (\lambda^m m^k)=0\}$
 is an ideal of the principle ideal domain $\C[x]$. From (a) we have $(x-\lambda)^k
 \notin \ann_{\C[x]}(\lambda^m m^k)$ and $(x-\lambda)^{k+1}
 \in \ann_{\C[x]}(\lambda^m m^k)$. Thus $\ann_{\C[x]}(\lambda^m m^k)
 =\C[x](x-\lambda)^{k+1}$. So we have proved (c).\end{proof}

Now we are ready to provide our main tool for later use.

\begin{proposition}\label{Key-computation}Let $P$ be a vector space over $\C$, $P_1$ be a subspace of $P$.
Suppose that $\lambda_1,\lambda_2,\ldots,\lambda_s\in \C^*$ are
pairwise distinct, $v_{i,j}\in P$ and $f_{i,j}(t)\in \C[t]$  with
$\deg(f_{i,j}(t))=j$ for $i=1,2,\ldots,s; j=0,1,2,\ldots,k.$ If
\begin{equation}\label{T} \sum_{i=1}^s \sum_{j=0}^k \lambda_i^m
f_{i,j}(m) v_{i,j}\in P_1, \forall m\in \Z,\end{equation} then $
v_{i, j}\in P_1$ for all $i, j$.

\end{proposition}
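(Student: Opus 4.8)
The plan is to reduce the general statement to the one-variable case already packaged in Lemma~\ref{Z-C}, by viewing the hypothesis \eqref{T} as a statement about annihilators in the $\C[x]$-module ${\mathcal M}(\Z,P)$. First I would consider the element $T\in {\mathcal M}(\Z,P)$ defined by $T(m)=\sum_{i=1}^s\sum_{j=0}^k \lambda_i^m f_{i,j}(m)v_{i,j}$, which by \eqref{T} lies in ${\mathcal M}(\Z,P_1)$. Passing to the quotient space $P/P_1$ and writing $\bar v_{i,j}$ for the image of $v_{i,j}$, the hypothesis becomes $\sum_{i,j}\lambda_i^m f_{i,j}(m)\bar v_{i,j}=0$ in $P/P_1$ for all $m\in\Z$, and it suffices to show every $\bar v_{i,j}=0$. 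So without loss of generality we may assume $P_1=0$ and must deduce $v_{i,j}=0$ for all $i,j$.

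Next I would argue by induction on $s$ (the number of distinct $\lambda_i$). The key device is the operator $(x-\lambda_s)^{k+1}\in\C[x]$ acting on ${\mathcal M}(\Z,P)$. Applying it to the identity $\sum_{i,j}\lambda_i^m f_{i,j}(m)v_{i,j}=0$: by Lemma~\ref{Z-C}(c) (applied coordinatewise, i.e.\ after composing with any linear functional on $P$, or just noting the computation in Lemma~\ref{Z-C}(a) is $P$-linear) the operator $(x-\lambda_s)^{k+1}$ kills each term $\lambda_s^m f_{s,j}(m)v_{s,j}$ since $\deg f_{s,j}\le k$. For the terms with $i<s$, note that $\lambda_i^m f_{i,j}(m)v_{i,j}$ is carried by $(x-\lambda_s)$ to $\lambda_i^{m+1}f_{i,j}(m+1)v_{i,j}-\lambda_s\lambda_i^m f_{i,j}(m)v_{i,j}=\lambda_i^m\big(\lambda_i f_{i,j}(m+1)-\lambda_s f_{i,j}(m)\big)v_{i,j}$, and since $\lambda_i\ne\lambda_s$ the bracketed polynomial in $m$ still has degree exactly $j$ with the same leading behavior up to a nonzero scalar $\lambda_i-\lambda_s$; iterating $k+1$ times keeps each surviving term of the form $\lambda_i^m g_{i,j}(m)v_{i,j}$ with $\deg g_{i,j}=j$ (the leading coefficient picks up a nonzero factor $(\lambda_i-\lambda_s)^{k+1}$). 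Hence $\sum_{i=1}^{s-1}\sum_{j=0}^k \lambda_i^m g_{i,j}(m)v_{i,j}=0$ for all $m$, and by the inductive hypothesis $v_{i,j}=0$ for $i<s$. The original identity then collapses to $\sum_{j=0}^k\lambda_s^m f_{s,j}(m)v_{s,j}=0$.

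It remains to handle the single-$\lambda$ case: if $\sum_{j=0}^k \lambda^m f_{j}(m)v_j=0$ in ${\mathcal M}(\Z,P)$ with $\deg f_j=j$, then all $v_j=0$. Since $\{f_0,f_1,\dots,f_k\}$ are polynomials of distinct degrees $0,1,\dots,k$, they form a basis of the space of polynomials of degree $\le k$; in particular each monomial $m^\ell$ for $0\le\ell\le k$ is a $\C$-linear combination of the $f_j$, so $\sum_{j} c_{\ell j} v_j$ can be rewritten and it is equivalent to show $\sum_{\ell=0}^k \lambda^m m^\ell w_\ell=0$ for all $m$ forces all $w_\ell=0$. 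This is a Vandermonde-type argument: evaluating at $m=0,1,\dots,k$ and dividing by $\lambda^m\ne0$ gives a linear system with the invertible Vandermonde matrix $(m^\ell)_{0\le m,\ell\le k}$, forcing $w_\ell=0$, hence $v_j=0$. (Alternatively, one can finish the single-$\lambda$ case directly via Lemma~\ref{Z-C}: apply $(x-\lambda)^k$ to extract $v_k$, then induct downward on the top degree.)

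I expect the main obstacle to be purely bookkeeping: verifying that applying powers of $(x-\lambda_s)$ does not accidentally drop the degree of the polynomial coefficients attached to the $i<s$ terms (this is exactly where pairwise distinctness of the $\lambda_i$ is used, and it must be checked that the leading coefficient of $\lambda_i f_{i,j}(m+1)-\lambda_s f_{i,j}(m)$ is $(\lambda_i-\lambda_s)$ times that of $f_{i,j}$, which is nonzero). Everything else is a routine induction plus a Vandermonde determinant, and the whole argument is ``$P$-linear'' so it reduces at once to the scalar statement of Lemma~\ref{Z-C} after quotienting by $P_1$.
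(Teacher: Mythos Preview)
Your proposal is correct. Both your argument and the paper's rest on the same device---the $\C[x]$-module structure on ${\mathcal M}(\Z,P)$ together with Lemma~\ref{Z-C}---but the induction is organized differently. You induct on the number $s$ of distinct $\lambda_i$: apply $(x-\lambda_s)^{k+1}$ to annihilate all $\lambda_s$-terms while preserving the degrees of the polynomial coefficients on the remaining $\lambda_i$-terms (this is exactly where $\lambda_i\ne\lambda_s$ enters), invoke the inductive hypothesis, and then treat the single-$\lambda$ base case by a Vandermonde argument. The paper instead inducts downward on the top degree $k$: for each $i$ it applies the single operator $p_i(x)=\prod_{l\ne i}(x-\lambda_l)^{k+1}\cdot(x-\lambda_i)^{k}$, which by Lemma~\ref{Z-C}(b) kills every term except the one of the form $\lambda_i^m f_{i,k}(m)v_{i,k}$ and evaluates that to a nonzero scalar multiple of $\lambda_i^{m+k}v_{i,k}$; this yields $v_{i,k}\in P_1$ for all $i$ at once, and one then repeats with $k$ replaced by $k-1$. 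The paper's route is a bit more streamlined (no separate base case, no degree-preservation check), while yours makes the role of the distinctness hypothesis slightly more transparent; both are short and entirely elementary.
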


\begin{proof} Let $p(x)=((x-\lambda_1)(x-\lambda_2)\cdots
(x-\lambda_s))^{k+1}$, $q_j(x)=p(x)/(x-\lambda_j)^{k+1}$, and
$p_j(x)=p(x)/(x-\lambda_j)$ for $j=1,2,\ldots, s$. Denote
$$T_k(m)=\sum_{i=1}^s \sum_{j=0}^{k} \lambda_i^m f_{i,j}(m)
v_{i,j}\in P_1, \forall m\in \Z.$$ From (\ref{T}) we have $T_k\in
{\mathcal M}(\Z,P_1)$. By Lemma \ref{Z-C}(b), we see that
$$p_i(x)\cdot(\lambda_l^m f_{l,j}(m)
v_{l,j})=\delta_{i,l}\delta_{j,k}a_{i,k}q_i(\lambda_i)k!\lambda_i^{m+k}v_{i,k}.$$
where $a_{i,k}$ is the coefficient of  $t^k$ in $f_{i,k}$. So
\begin{equation}(p_i(x) \cdot T_k)(m)=a_{i,k}q_i(\lambda_i)k!\lambda_i^{m+k}v_{i,k}\in P_1, \forall\,\, m\in
\Z.
\end{equation}
Since $q_i(\lambda_i)\ne0$, we see that
 $v_{i,k}\in P_1$ for all $i=1,2,\ldots,s$, and
$$T_{k-1}(m)=\sum_{i=1}^s \sum_{j=0}^{k-1} \lambda_i^m
f_{i,j}(m) v_{i,j}\in P_1, \forall m\in \Z.$$ In this manner we
deduce that each $v_{i,j}\in P_1$.
\end{proof}

\begin{remark} In (\ref{T}), to satisfy the conditions for $f_{i,j}$, many $v_{i,j}$ may be zero.
If we take $P_1=0$, the corresponding result in this proposition becomes that $v_{i,j}=0$ for all $i,j$.\end{remark}

\section{Simplicity of Weight  Virasoro modules $\mathcal {L}(W,
\lambda,a,b)$}

For any $W\in \mathcal{O}_{\mathfrak{W}}$, $\lambda\in \C^*$, $a,
b\in\C$,  we have defined the  weight Virasoro module
$\mathcal{L}(W,\lambda,a,b)$ in Sect.3. In this section we will
determine   necessary and sufficient conditions for $\mathcal {L}(W,
\lambda,a,b)$ to be simple, and find all its submodules if it is not
simple.

For any simple $\mathfrak{b}$-module $B$  and any $b\in \C$, we can
have a new $\mathfrak{b}$-module structure on $B$, denoted by
$B_{(b)}$, with the new action $d_0 \cdot v=(d_0+b) v$ and $d_i
\cdot v= d_i v$ for all $v\in B$ and $i>0$. The new
$\mathfrak{b}$-module $B_{(b)}$ is also simple.

Let $W\in \mathcal{O}_{\mathfrak{W}}$ be simple.  We know that
$\Soc_{\mathfrak{b}}{W}$ is a simple $\mathfrak{b}$-module. Then
$(\Soc_{\mathfrak{b}}{W})\otimes \C[t,t^{-1}]$ is a submodule of
$\mathcal {L}(W, 1,a,0)$, which is exactly the Virasoro module
$\mathcal{N}(\Soc_{\mathfrak{b}}(W), a)$ defined and studied in
\cite{LLZ}. For $n\in \N$, in $W$ we define the subspace
$W^{(n)}=\sum_{i=0}^n d_{-1}^i (\Soc_{\mathfrak{b}}{W})$. The
following lemma solves the simplicity of $\mathcal {L}(W,
\lambda,a,b)$ for $\lambda=1$.

\begin{lemma}\label{lambda=1}For $a, b\in \C$ and
any nontrivial simple $W\in \mathcal{O}_{\mathfrak{W}}$, the
Virasoro module $\mathcal {L}(W, 1,a,b)$ has a filtration of
submodules
$$W^{(0)}\otimes \C[t,t^{-1}]\subset W^{(1)}\otimes
\C[t,t^{-1}]\subset \cdots \subset W^{(n)}\otimes
\C[t,t^{-1}]\subset \cdots,$$  with $(W^{(n)}\otimes
\C[t,t^{-1}])/(W^{(n-1)}\otimes \C[t,t^{-1}])\cong
\mathcal{N}((\Soc_{\mathfrak{b}}{W})_{(b)},a)$ for all $n\in \Z_+$.
\end{lemma}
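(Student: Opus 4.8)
The plan is to exploit the structure of nontrivial simple modules in $\mathcal{O}_{\mathfrak{W}}$ provided by Lemma~\ref{char-O(w)}. Write $S:=\Soc_{\mathfrak{b}}(W)$; it is a nontrivial simple $\mathfrak{b}$-module and $W\cong\Ind_{\mathfrak{b}}^{\mathfrak{W}}S=\C[d_{-1}]\otimes S$, so as a vector space $W=\bigoplus_{i\ge 0}d_{-1}^{\,i}S$ and $W^{(n)}=\bigoplus_{i=0}^{n}d_{-1}^{\,i}S$. Since $S\ne 0$, the chain $W^{(0)}\subset W^{(1)}\subset\cdots$ is strictly increasing with union $W$, whence $\bigcup_n\bigl(W^{(n)}\otimes\C[t,t^{-1}]\bigr)=\mathcal{L}(W,1,a,b)$. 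It therefore suffices to prove that each $W^{(n)}\otimes\C[t,t^{-1}]$ is a $\Vir$-submodule and to compute the successive quotients.

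First I would check that $W^{(n)}$ is stable under $\mathfrak{b}=\span\{d_p\mid p\ge 0\}$. The only ingredient needed is $[d_p,d_{-1}]=-(p+1)d_{p-1}$: moving $d_p$ to the right through $d_{-1}^{\,i}$ rewrites $d_pd_{-1}^{\,i}$ as a linear combination of monomials $d_{-1}^{\,i'}d_q$ with $i'\le i$ and $q\ge -1$ (the floating index never drops below $-1$, because $\ad(d_{-1})$ annihilates $d_{-1}$). Evaluated on $S$, a monomial with $q\ge 0$ lands in $d_{-1}^{\,i'}S\subseteq W^{(i)}$ since $S$ is $\mathfrak{b}$-stable, and any monomial with $q=-1$ collapses to $d_{-1}^{\,i-p}S\subseteq W^{(i)}$; hence $d_pW^{(n)}\subseteq W^{(n)}$. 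Now, on $\mathcal{L}(W,1,a,b)$ one has $d_k\cdot(w\otimes t^j)=\bigl((e^{kt}-1)\tfrac{d}{dt}+a+kb+j\bigr)w\otimes t^{k+j}$, and $(e^{kt}-1)\tfrac{d}{dt}=\sum_{l\ge 1}\tfrac{k^l}{l!}\,d_{l-1}$ acts, by Condition~A, on each fixed $w\in W$ as a finite linear combination of $d_0,d_1,d_2,\dots\in\mathfrak{b}$; combined with $z\cdot(w\otimes t^j)=0$, this shows that $W^{(n)}\otimes\C[t,t^{-1}]$ is a $\Vir$-submodule.

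To identify the quotients I would use the linear isomorphism $\sigma_n\colon S\to W^{(n)}/W^{(n-1)}$, $s\mapsto d_{-1}^{\,n}s+W^{(n-1)}$, and transport the $d_k$-action through $\sigma_n\otimes\mathrm{id}$ onto $S\otimes\C[t,t^{-1}]$ (using that tensoring with the free module $\C[t,t^{-1}]$ commutes with quotients). The heart of the computation is to move each $d_{l-1}$ past $d_{-1}^{\,n}$ modulo $W^{(n-1)}$: for $l\ge 2$, every correction term arising from $[d_{l-1},d_{-1}^{\,n}]$ has strictly fewer factors $d_{-1}$ and a non-negative floating index, hence lies in $W^{(n-1)}$, so $d_{l-1}d_{-1}^{\,n}s\equiv d_{-1}^{\,n}d_{l-1}s$; for $l=1$ the exact identity $d_0d_{-1}^{\,n}=d_{-1}^{\,n}(d_0-n)$ feeds an additional scalar term back in. Collecting the contributions and comparing with the defining action of the modules $\mathcal{N}(\,\cdot\,,a)$ of \cite{LLZ}, one finds that $\sigma_n\otimes\mathrm{id}$ intertwines the quotient with $\mathcal{N}\bigl((\Soc_{\mathfrak{b}}W)_{(b)},a\bigr)$, which gives the claimed quotients.

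The submodule property and the exhaustiveness of the filtration are routine once Lemma~\ref{char-O(w)} is in hand. The step I expect to be the main obstacle is the commutator bookkeeping in the last paragraph: one must check carefully that \emph{every} term produced by $[d_{l-1},d_{-1}^{\,n}]$ either reproduces $d_{-1}^{\,n}(\text{something in }S)$ or genuinely lowers the $d_{-1}$-degree below $n$ (so that it dies in $W^{(n)}/W^{(n-1)}$), so that in the quotient only the scalar correction dictated by $d_0d_{-1}^{\,n}=d_{-1}^{\,n}(d_0-n)$ survives.
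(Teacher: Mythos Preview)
Your argument follows the paper's exactly: use $W\cong\Ind_{\mathfrak b}^{\mathfrak W}S$ with $S=\Soc_{\mathfrak b}W$, act by $d_m$ on $d_{-1}^{\,n}w\otimes t^k$, and reduce modulo $W^{(n-1)}\otimes\C[t,t^{-1}]$. The paper commutes the whole operator $e^{mt}\tfrac{d}{dt}$ past $d_{-1}^{\,n}$ in one stroke via $e^{mt}\tfrac{d}{dt}\,d_{-1}^{\,n}=(d_{-1}-m)^{n}e^{mt}\tfrac{d}{dt}$, while you push each $d_{l-1}$ through separately; these are the same calculation organised differently.

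There is one inconsistency you should fix. You correctly note $d_0d_{-1}^{\,n}=d_{-1}^{\,n}(d_0-n)$, but then assert that the quotient matches $\mathcal N(S_{(b)},a)$. If you actually collect the terms, the action on the $n$-th subquotient reads
\[
d_k\cdot\bigl(\sigma_n(s)\otimes t^j\bigr)
=\sigma_n\Bigl(\bigl(k(d_0+b-n)+\textstyle\sum_{l\ge 2}\tfrac{k^l}{l!}d_{l-1}+a+j\bigr)s\Bigr)\otimes t^{k+j},
\]
so what drops out is $\mathcal N(S_{(b-n)},a)$, not $\mathcal N(S_{(b)},a)$. The paper's own display makes the same slip: in $(d_{-1}-m)^{n}e^{mt}\tfrac{d}{dt}\,w$, the contribution $\binom{n}{n-1}(-m)\,d_{-1}^{\,n-1}\cdot d_{-1}w=-nm\,d_{-1}^{\,n}w$ is \emph{not} in $W^{(n-1)}$ and should have been kept. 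This does not affect the role of the lemma in Theorem~\ref{weight-simplicity}, where only the existence of the proper submodule $W^{(0)}\otimes\C[t,t^{-1}]$ is used; but the assertion that all the successive quotients are isomorphic to a single $\mathcal N(S_{(b)},a)$ is not what the computation yields (for instance, when $S$ is one-dimensional with $d_0$-eigenvalue $b'$, the quotients are the intermediate series modules $A_{a,\,b+b'-n}$, which are pairwise non-isomorphic for generic parameters).
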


\begin{proof} Since $W$ is nontrivial, from Lemma \ref{char-O(w)} we know that $W\cong \newline\Ind_{\mathfrak{b}}^{\mathfrak{W}}(\Soc_{\mathfrak{b}}W)$.
 For any $w\in \Soc_{\mathfrak{b}}{W}$, $m, k\in\Z$,
$n\in\Z_+$, we compute
$$d_m\cdot((d_{-1}^{n}w)\otimes t^k)%=((d_m-d_0+bm+a+k)\circ  (d_{-1}^{n}w))\otimes t^{k+m}
%$$
%$$
=((e^{mt}-1)\frac d{dt}+bm+a+k) (d_{-1}^{n}w)\otimes t^{k+m} $$
$$=((d_{-1}-m)^{n}(e^{mt}\frac d{dt})+d_{-1}^{n}(-d_{-1}+bm+a+k)) w\otimes t^{k+m} $$
$$\equiv d_{-1}^{n}(m(d_0+b)+\sum_{j=2}^\infty \frac {m^j}{j!}d_{j-1} +a+k) w\otimes
t^{k+m} \,\,\,$$ $$(\hskip -.5cm\mod   W^{(n-1)}\otimes
\C[t,t^{-1}]).
$$
We see that each $ W^{(n)}\otimes \C[t,t^{-1}]$ is a submodule of
$\mathcal {L}(W, 1,a,b)$ and   $(W^{(n)}\otimes
\C[t,t^{-1}])/(W^{(n-1)}\otimes \C[t,t^{-1}])\cong
\mathcal{N}((\Soc_{\mathfrak{b}}{W})_{(b)},a)$.
\end{proof}

From Theorem 4 in \cite{LLZ} we know that the above Virasoro module
$\mathcal{N}((\Soc_{\mathfrak{b}}{W})_{(b)},a)$ is simple if $W$ is
not a highest weight module.

The following lemma solves the simplicity of $\mathcal {L}(W,
\lambda,a,b)$  for $b=1$.

\begin{lemma}\label{b=1}For any nontrivial simple $W\in
\mathcal{O}_{\mathfrak{W}}$, the subspace
$$\mathcal {L}'(W, \lambda,a,1)=\oplus_{n\in \Z}((d_{-1}-a-n)W)\otimes t^n \subset
\mathcal {L}(W, \lambda,a,1)$$ is a submodule which is isomorphic to
$\mathcal {L}(W, \lambda,a,0)$ with the quotient $\mathcal {L}(W,
\lambda,a,1)/\mathcal {L}'(W, \lambda,a,1)\cong
\mathcal{N}((\Soc_{\mathfrak{b}}{W})_{(1)}, a).$% \cong
%\mathcal{N}((\Soc_{\mathfrak{b}}{W})_{(1)}, a)$$
\end{lemma}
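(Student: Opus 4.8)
The plan is to analyze the action of $d_n$ on $\mathcal{L}(W,\lambda,a,1)$ and show directly that the subspace $\mathcal{L}'(W,\lambda,a,1)=\bigoplus_{n\in\Z}\bigl((d_{-1}-a-n)W\bigr)\otimes t^n$ is closed under the Virasoro action, then identify the submodule and the quotient by explicit isomorphisms. First I would compute, for $w\in W$, $n,j\in\Z$,
\begin{equation*}
d_n\cdot(w\otimes t^j)=\bigl((\lambda^n e^{nt}-1)\tfrac{d}{dt}+a+n+j\bigr)w\otimes t^{n+j}
=\bigl(\lambda^n(e^{nt}\tfrac{d}{dt})-d_{-1}+a+n+j\bigr)w\otimes t^{n+j},
\end{equation*}
using $b=1$. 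The key algebraic fact to exploit is that in $U(\mathfrak{V})$ one has $(e^{nt}\tfrac{d}{dt})(\tfrac{d}{dt})=(e^{nt}\tfrac{d}{dt})(\tfrac{d}{dt}-n)$, equivalently $e^{nt}\tfrac{d}{dt}$ shifts $d_{-1}$ by $-n$ from the left: $(e^{nt}\tfrac{d}{dt})\,d_{-1}=(d_{-1}-n)(e^{nt}\tfrac{d}{dt})$. So applying $d_n$ to an element of the form $(d_{-1}-a-j)w\otimes t^j$ should, after commuting $d_{-1}$ past $e^{nt}\tfrac{d}{dt}$, again produce something lying in $(d_{-1}-a-n-j)W\otimes t^{n+j}$; this is the computation establishing that $\mathcal{L}'$ is a submodule, and it is the step I expect to require the most care (the exponential $e^{nt}$ contributes infinitely many terms $d_{i}$, $i\ge -1$, each of which must be checked to preserve the shifted factor, but Condition A makes the sum finite on any given vector).

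Next I would exhibit the isomorphism $\mathcal{L}'(W,\lambda,a,1)\cong\mathcal{L}(W,\lambda,a,0)$. The natural candidate is the map sending the basis-like element $w\otimes t^n\in\mathcal{L}(W,\lambda,a,0)$ to $(d_{-1}-a-n)w\otimes t^n\in\mathcal{L}'(W,\lambda,a,1)$, i.e. the $\C[t,t^{-1}]$-linear map induced by $w\mapsto(d_{-1}-a-n)w$ on the degree-$n$ component. One must check this is well-defined and bijective onto $\mathcal{L}'$ — here I would use that $d_{-1}$ acts injectively on a nontrivial simple $W$ (this follows from Lemma \ref{char-O(w)}: $W\cong\Ind_{\mathfrak{b}}^{\mathfrak{W}}(\Soc_{\mathfrak{b}}W)=\C[d_{-1}]\otimes\Soc_{\mathfrak{b}}W$, on which $d_{-1}$ is clearly injective), so $d_{-1}-a-n$ is injective and the map is a bijection $W\to(d_{-1}-a-n)W$. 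Intertwining with the $\Vir$-action is then exactly the commutation identity from the previous paragraph, comparing the $b=1$ action twisted by the shift with the $b=0$ action.

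Finally, for the quotient, observe that $\mathcal{L}(W,\lambda,a,1)/\mathcal{L}'(W,\lambda,a,1)$ has degree-$n$ component $W/(d_{-1}-a-n)W$. Since $W=\C[d_{-1}]\otimes\Soc_{\mathfrak{b}}W$, the quotient $W/(d_{-1}-a-n)W$ is one copy of $\Soc_{\mathfrak{b}}W$ as a vector space (evaluation at $d_{-1}=a+n$), so the quotient module is $(\Soc_{\mathfrak{b}}W)\otimes\C[t,t^{-1}]$ as a space; I would then read off the induced $\Vir$-action modulo $\mathcal{L}'$. In the formula for $d_n\cdot(w\otimes t^j)$ with $w\in\Soc_{\mathfrak{b}}W$, the term $\lambda^n(e^{nt}\tfrac{d}{dt})w$ expands as $\lambda^n\sum_{i\ge 0}\tfrac{n^i}{i!}d_{i-1}w$; modulo $\mathcal{L}'$ (i.e. setting $d_{-1}\equiv a+n+j$ in that degree) and using $d_iw=0$ for $i\ge 1$ (definition of $\Soc_{\mathfrak{b}}$, which has order $0$ unless... here I use Lemma \ref{char-O(b)} to note $\Soc_{\mathfrak{b}}W$ may have positive order, so more precisely the $d_i$, $i\ge 0$, act through the finite-dimensional quotient $\mathfrak{a}_r$), one checks the residual action is precisely that defining $\mathcal{N}((\Soc_{\mathfrak{b}}W)_{(1)},a)$, where the subscript $(1)$ records the $b=1$ twist $d_0\cdot v=(d_0+1)v$ surviving in the quotient. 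Matching constants — in particular that the coefficient of $w\otimes t^{n+j}$ from the $d_{-1}$ and $a+n+j$ terms combines correctly with the $b m$-type term — is the bookkeeping that completes the identification, and I would present it by direct comparison with the defining formula of $\mathcal{N}$ from \cite{LLZ}.
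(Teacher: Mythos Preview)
Your approach is essentially the paper's: define $\tau(w\otimes t^n)=(d_{-1}-a-n)w\otimes t^n$, use the commutation $(e^{mt}\tfrac{d}{dt})\,d_{-1}=(d_{-1}-m)(e^{mt}\tfrac{d}{dt})$ to check $\mathcal{L}'$ is a submodule and $\tau$ intertwines, then read off the quotient action. The first two steps go through exactly as you outline.

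There is one omission in the quotient step. When you reduce $d_n\cdot(w\otimes t^j)$ modulo $\mathcal{L}'$ for $w\in\Soc_{\mathfrak{b}}W$, the term $(-d_{-1}+a+n+j)w\otimes t^{n+j}$ lies in $\mathcal{L}'$ and dies, leaving
\[
d_n\cdot(w\otimes t^j)\equiv \lambda^n\Bigl((a+n+j)+nd_0+\sum_{i\ge 2}\tfrac{n^i}{i!}d_{i-1}\Bigr)w\otimes t^{n+j}\pmod{\mathcal{L}'},
\]
whereas the action in $\mathcal{N}((\Soc_{\mathfrak{b}}W)_{(1)},a)$ is the same expression \emph{without} the prefactor $\lambda^n$. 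This is not just ``matching constants'': you must rescale the representatives, replacing $w\otimes t^j$ by $w\otimes \lambda^j t^j$, to absorb the $\lambda^n$. The paper does exactly this in its final displayed computation. Once you insert that rescaling your argument is complete and coincides with the paper's proof.
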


\begin{proof} Since $W$ is nontrivial, from Lemma \ref{char-O(w)} we know that $W\cong \newline\Ind_{\mathfrak{b}}^{\mathfrak{W}}(\Soc_{\mathfrak{b}}W)$.
Define  the linear map $\tau:\mathcal {L}(W, \lambda,a,0)\rightarrow
L'(W,\lambda,a,1)$ by $$\tau (w\otimes t^n)=((d_{-1}-a-n)w)\otimes
t^n, \forall w\in W, n\in \Z$$ which is clearly   bijective. Now for
any $m, n\in \Z$ and $w\in W$, we compute that
$$ d_m\cdot (((d_{-1}-a-n)w)\otimes t^n)$$ $$=(\lambda^me^{mt}\frac{d}{d
t}-\frac{d}{d t}+a+m+n)(d_{-1}-a-n)w\otimes t^{m+n}$$
$$=(\frac{d}{d t}-a-m-n)(\lambda^me^{mt}\frac{d}{d t}-\frac{d}{d
t}+a+n)w\otimes t^{m+n}\in \mathcal {L}'(W, \lambda,a,1),$$ to see
that $\mathcal {L}'(W, \lambda,a,1)$ is a submodule.

From the computations
$$\tau( d_m\cdot (w\otimes t^n))$$ $$=\tau((\lambda^me^{mt}\frac{d}{d
t}-\frac{d}{d t}+a+n)w\otimes t^{m+n})$$
$$=(\frac{d}{d t}-a-m-n)(\lambda^me^{mt}\frac{d}{d t}-\frac{d}{d
t}+a+n)w\otimes t^{m+n}\in \mathcal {L}'(W, \lambda,a,1),$$
$$d_m\cdot \tau(w\otimes t^n)=(\lambda^me^{mt}\frac{d}{d
t}+(-\frac{d}{d t}+a+n+m))(\frac{d}{d t}-a-n)w\otimes t^{m+n}$$
$$=((\frac{d}{d t}-a-n-m)\lambda^me^{mt}\frac{d}{d t}+(-\frac{d}{d
t}+a+n+m)(\frac{d}{d t}-a-n))w\otimes t^{m+n}$$ $$=(\frac{d}{d
t}-a-n-m)(\lambda^me^{mt}\frac{d}{d t}-\frac{d}{d t}+a+n)w\otimes
t^{m+n} \in \mathcal {L}'(W, \lambda,a,1),$$ we obtain that $d_m
\cdot \tau(w\otimes t^n)=\tau(d_m \cdot( w\otimes t^n))$. So  $\tau$
is a $\Vir$-module isomorphism.

From $$d_m\cdot (w\otimes \lambda^nt^n)=\lambda^n(\lambda^m
e^{mt}\frac{d}{d t}-\frac{d}{d t}+a+m+n)w\otimes t^{m+n}$$ $$\equiv
\lambda^{m+n} e^{mt}\frac{d}{d t}w\otimes t^{m+n} {\rm mod} \mathcal
{L}'(W, \lambda,a,1)$$
$$=((e^{mt}\frac{d}{d t}-\frac{d}{d t})+a+m+n)w\otimes
\lambda^{m+n}t^{m+n} {\rm mod} \mathcal {L}'(W, \lambda,a,1),$$ we
see that $\mathcal {L}(W, \lambda,a,1)/\mathcal {L}'(W, \lambda,a,1)
\cong \mathcal{N}((\Soc_{\mathfrak{b}}{W})_{(1)}, a)$.
\end{proof}

The following result shows that the Virasoro module $\mathcal
{L}(W,-1,a,b)$ is not simple if $W\in \mathcal {O}_{\mathfrak{W}}$
is a nontrivial highest weight module with highest weight $b-1$.

\begin{lemma}\label{b=b'+1}%\cite{CM}
 Let $W\in \mathcal{O}_{\mathfrak{W}}$ be a highest weight module with
the highest weight vector $w_0$ of the highest weight $b'\ne 0$.
Then
\begin{equation}\aligned &\mathcal{L}_0(a,b')=\span\{ d_1^i\cdot
(w_0\otimes t^{2j})|i\in\Z_+,j\in \Z\},\\
&\mathcal {L}_1(a,b')=\span\{ d_1^i\cdot (w_0\otimes
t^{2j+1})|i\in\Z_+,j\in \Z\}\endaligned\end{equation} are submodules
of $\mathcal {L}(W,-1,a,b'+1)$, and $$\mathcal
{L}(W,-1,a,b'+1)=\mathcal {L}_0(a,b')\oplus \mathcal {L}_1(a,b').$$
\end{lemma}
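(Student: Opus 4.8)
The plan is to exhibit the two subspaces $\mathcal{L}_0(a,b')$ and $\mathcal{L}_1(a,b')$ concretely, show each is closed under the $\Vir$-action, and show their sum is direct and exhausts the whole module. The crucial structural feature is that $\lambda=-1$, so that $\lambda^m = (-1)^m$ depends only on the parity of $m$; hence the action of $d_m$ sends the "$t^j$-component with $j$ of a fixed parity" into a component whose parity is $j+m \pmod 2$, and the parity of the exponent of $d_{-1}$ (equivalently of $d_1$, after the reindexing) tracks along. Concretely, since $W$ is a highest weight module with highest weight vector $w_0$ of highest weight $b'$, by Lemma \ref{char-O(w)} we have $W = \Ind_{\mathfrak{b}}^{\mathfrak{W}}(\Soc_{\mathfrak{b}}W)$; but actually here $W$ is highest weight so $W = U(\C d_{-1})w_0 = \span\{d_{-1}^k w_0 : k \in \Z_+\}$, and a basis of $\mathcal{L}(W,-1,a,b'+1)$ is $\{d_{-1}^k w_0 \otimes t^j : k \in \Z_+, j \in \Z\}$.

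First I would reduce the spanning sets defining $\mathcal{L}_0$ and $\mathcal{L}_1$ to manageable bases. Using the formula from Subsection 3.2 (with $\lambda = -1$, $b = b'+1$) one computes $d_1 \cdot (d_{-1}^k w_0 \otimes t^i)$ explicitly; the key point is that $d_1^i \cdot (w_0 \otimes t^j)$, expanded in the basis, is a combination of vectors $d_{-1}^k w_0 \otimes t^{j'}$ with $j' \equiv j \pmod 2$ — indeed $j' = j + i$ and the $i$ applications of $d_1$ each shift the exponent by $1$, so after $i$ steps the parity is $j + i$; wait, one must be careful, but the definition already fixes $j$ even in $\mathcal{L}_0$ and the monomial is $d_1^i(w_0 \otimes t^{2j})$ landing at exponent $2j + i$. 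So I would instead argue: $\mathcal{L}_0(a,b')$ is spanned by vectors whose "$t$-exponent plus $d_1$-power" is even — more usefully, I would show directly that $\mathcal{L}_0(a,b') = \span\{ v \otimes t^n : v \in W_{\bar n} \}$ for a suitable parity-graded decomposition $W = W_{\bar 0} \oplus W_{\bar 1}$ is \emph{not} quite the right description; rather the cleanest route is to verify that $\mathcal{L}_0$ and $\mathcal{L}_1$ together span everything by showing $w_0 \otimes t^j \in \mathcal{L}_0 + \mathcal{L}_1$ for all $j$ (immediate: take $i=0$) and then $d_{-1}^k w_0 \otimes t^j \in \mathcal{L}_{\bar j}$ by induction on $k$, inverting the leading term of the $d_1^i$-action — i.e. $d_1 \cdot (d_{-1}^k w_0 \otimes t^i) = (\text{nonzero scalar})\, d_{-1}^{k+1} w_0 \otimes t^{i+1} + (\text{lower } d_{-1}\text{-degree terms of exponent } i+1)$, which lets me solve for $d_{-1}^{k+1}w_0 \otimes t^{i+1}$ in terms of lower terms and an element of $d_1 \mathcal{L}$. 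Here I must check the leading coefficient is genuinely nonzero, which is where the hypothesis $b' \neq 0$ enters.

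Then I would prove closure: $d_m \cdot \mathcal{L}_{\bar 0} \subseteq \mathcal{L}_{\overline{m}}$ and $d_m \cdot \mathcal{L}_{\bar 1} \subseteq \mathcal{L}_{\overline{m+1}}$, using the commutation relation $[d_m, d_1] = (1-m) d_{m+1}$ in $\Vir$. Since $\mathcal{L}_0$ is generated as a vector space by $d_1^i \cdot (w_0 \otimes t^{2j})$, it suffices to push $d_m$ past $d_1^i$: $d_m d_1^i (w_0 \otimes t^{2j})$ is a combination of $d_1^{i'} d_{m+ \ast} (w_0 \otimes t^{2j})$ terms, and using $d_\ell w_0 = 0$ for $\ell > 0$, $d_0 w_0 = b' w_0$, together with $d_{-1}^? $... — concretely each such term reduces to $d_1^{i'} \cdot (w_0 \otimes t^{2j + m})$ (a basis-type generator of $\mathcal{L}_{\overline{2j+m}} = \mathcal{L}_{\bar m}$) plus, when some $d_\ell$ with $\ell \le -1$ appears, lower-order corrections that by the earlier spanning argument also lie in $\mathcal{L}_{\bar m}$. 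The parity bookkeeping here is the engine: every generator of $\mathcal{L}_{\bar 0}$ carries an even total shift, applying $d_m$ adds $m$, so one lands in $\mathcal{L}_{\bar m}$, never mixing $\mathcal{L}_0$ and $\mathcal{L}_1$ unless $m$ is odd. Finally, directness $\mathcal{L}_0 \cap \mathcal{L}_1 = 0$ and $\mathcal{L}_0 + \mathcal{L}_1 = \mathcal{L}(W,-1,a,b'+1)$ follows from the spanning statement plus the observation that the generators of $\mathcal{L}_0$ expand into basis vectors $d_{-1}^k w_0 \otimes t^{j}$ with $j$ even and those of $\mathcal{L}_1$ into ones with $j$ odd.

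The main obstacle I anticipate is the closure computation for general $d_m$ — specifically, controlling the "lower-order terms" (the contributions where $e^{mt}\frac{d}{dt}$ produces factors $d_{-1}, d_0, d_1, \dots$ hitting $w_0$) and confirming they stay in the correct parity class rather than drifting. The parity argument makes this plausible a priori, but writing it rigorously requires either an explicit formula for $d_m \cdot (d_{-1}^k w_0 \otimes t^i)$ like the one in Subsection 3.2 (which I would adapt with $\lambda = -1$), or a clean inductive reformulation of $\mathcal{L}_0, \mathcal{L}_1$ as the $\C$-spans of $\{d_{-1}^k w_0 \otimes t^{2j-k} : k \in \Z_+, j \in \Z\}$ and $\{d_{-1}^k w_0 \otimes t^{2j+1-k}\}$ — i.e. grading by $(k + \text{exponent}) \bmod 2$ — which I suspect is the honest description and makes both closure and directness transparent, at the cost of one lemma identifying this span with the $d_1^i$-span. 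I would also need to double-check that $b' \neq 0$ is exactly what guarantees the leading coefficients in the inductive inversion never vanish, since if $b' = 0$ the highest weight module behaves differently (consistent with the lemma's hypothesis).
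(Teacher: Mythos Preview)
Your proposal rests on a genuine misconception: you write $d_m \cdot \mathcal{L}_{\bar 0} \subseteq \mathcal{L}_{\overline m}$, treating the decomposition as a $\Z/2$-grading in which $d_m$ carries degree $\bar m$. But the lemma asserts that $\mathcal{L}_0$ and $\mathcal{L}_1$ are each \emph{submodules}, closed under all $d_m$ regardless of parity; indeed $d_1 \cdot (d_1^i(w_0\otimes t^{2j})) = d_1^{i+1}(w_0\otimes t^{2j}) \in \mathcal{L}_0$, not $\mathcal{L}_1$. Your alternative description $\span\{d_{-1}^k w_0 \otimes t^{n} : k+n \text{ even}\}$ is built on the same misconception and is wrong: already $d_1 \cdot (w_0 \otimes t^0) = (-2d_{-1}+a+1)\,w_0 \otimes t$ contains the component $(a+1)\, w_0 \otimes t$ with $k+n=1$.

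What actually happens is this. Your commutation idea $d_m d_1^i \in \sum_{j=0}^i \C[d_1]\, d_{m+j}$ (an easy induction in $U(\Vir)$) is correct and is the paper's first step; it reduces closure to showing $d_n \cdot (w_0 \otimes t^{2j}) \in \mathcal{L}_0$ for \emph{every} $n$. For even $n=2k$ one gets a scalar multiple of $w_0 \otimes t^{2j+2k}$; for odd $n=2k+1$ the crucial computation (using precisely $\lambda=-1$ and $b=b'+1$) is the exact identity
\[
d_{2k+1}\cdot(w_0\otimes t^{2j}) \;=\; (-2d_{-1}+a+2k+2j+1)\,w_0\otimes t^{2k+2j+1} \;=\; d_1\cdot(w_0\otimes t^{2(k+j)}),
\]
with no lower-order remainder to control. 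Thus $d_n\cdot(w_0\otimes t^{2j}) \in \mathcal{L}_0$ for all $n$, and invariance follows. Your spanning/directness argument via the leading term of $d_1^i(w_0\otimes t^j) = f_{i,j}(d_{-1})\,w_0\otimes t^{i+j}$ with $\deg f_{i,j}=i$ matches the paper's; note however that the leading coefficient is $(-2)^i$, nonzero regardless of $b'$, so $b'\ne 0$ is not used there---it is used only to guarantee $W=\C[d_{-1}]\otimes\C w_0$.
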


\begin{proof} By induction on $i\in\Z_+$ we can easily show that
 $$d_md_1^i\in \sum_{j=0}^{i} \C[d_1]d_{m+j}.$$ In $\mathcal {L}(W,-1,a,b'+1)$, for $ k,j\in \Z$ we
 compute
$$d_{2k} \cdot (w_0\otimes t^{2j})=(2k b'+a+2j+2k(b'+1))w_0\otimes
t^{2j+2k},$$ $$d_{2k+1} \cdot (w_0\otimes
t^{2j})=(-2d_{-1}+a+2k+2j+1)w_0\otimes t^{2k+2j+1}$$ $$=d_1\cdot
(w_0\otimes t^{2k+2j})\in \mathcal{L}_0(a,b'),$$ $$d_{2k} \cdot
(w_0\otimes t^{2j+1})=(2k b'+a+2j+1+2k(b'+1))w_0\otimes
t^{2j+2k+1},$$
$$d_{2k+1} \cdot (w_0\otimes t^{2j+1})=(-2d_{-1}+a+2k+2j+2)w_0\otimes
t^{2k+2j+2}$$ $$=d_1\cdot (w_0\otimes t^{2k+2j+1})\in
\mathcal{L}_1(a,b').$$ Using the above formulas we deduce that
$\mathcal {L}_0(a,b'), \mathcal {L}_1(a,b')$ are submodules of
$\mathcal {L}(W,-1,a,b'+1)$.

Since $b'\ne0$ we know that $W=\C[d_{-1}] w_0 =\C[d_{-1}]\otimes \C
w_0 $.

Denote $d_1^i\cdot (w_0\otimes t^j)=f_{i,j}(d_{-1})w_0\otimes
t^{i+j}$.
 Together with
$$d_1\cdot (f(d_{-1})w_0\otimes t^k)=(-2d_{-1}+a+k+b\hskip 3cm$$ $$+\sum_{j=1}^\infty \frac{
d_{j-1}}{j!})f(d_{-1})w_0\otimes t^{k+1}, \,\,\forall\,\,f(t)\in
\C[t], m,k\in\Z,$$ we may deduce  inductively that $f_{i,j}$ are
polynomials of degree $i$. We see  that  $\{ d_1^i\cdot (w_0\otimes
t^{j})|i\in\Z_+,j\in \Z\}$ is linearly independent. Consequently,
 $\{ d_1^i\cdot (w_0\otimes
t^{j})|i\in\Z_+,j\in \Z\}$ is  a basis for $\mathcal
{L}(W,-1,a,b'+1)$. Therefore we have $\mathcal
{L}(W,-1,a,b'+1)=\mathcal {L}_0(a,b')\oplus \mathcal {L}_1(a,b').$
 Thus we have proved the lemma.
\end{proof}

\begin{lemma}\label{lemma-19}Let $\lambda,a,b\in \C$ with $\lambda\ne 0,1$, and $W\in \mathcal{O}_{\mathfrak{W}}$ be
nontrivial and simple. Let $M$ be any submodule of $\mathcal
{L}(W,\lambda,a,b)$. If the subspace $\hat{M}=\{v\in W|v\otimes
\C[t,t^{-1}]\subseteq M\}$ is nonzero, then $M=\mathcal
{L}(W,\lambda,a,b)$.
\end{lemma}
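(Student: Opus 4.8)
The plan is to show that $\hat M$ is a nonzero $\mathfrak{W}$-submodule of $W$, hence equal to $W$ by simplicity, which immediately gives $M = W\otimes\C[t,t^{-1}] = \mathcal{L}(W,\lambda,a,b)$. The main point is therefore the $\mathfrak{W}$-invariance of $\hat M$, i.e.\ that $d_i\hat M\subseteq\hat M$ for all $i\ge -1$.

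First I would unwind the definition: $v\in\hat M$ means $v\otimes t^j\in M$ for every $j\in\Z$. Fix such a $v$ and apply the module action to $v\otimes t^j$. By formula (3.2),
$$
d_k\cdot(v\otimes t^j) = \bigl((\lambda^k e^{kt}-1)\tfrac{d}{dt}+a+kb+j\bigr)v\otimes t^{k+j}\in M .
$$
Subtracting the scalar multiple $(a+kb+j)\,(v\otimes t^{k+j})\in M$ (legitimate since $v\otimes t^{k+j}\in M$ because $v\in\hat M$), we get
$$
\bigl((\lambda^k e^{kt}-1)\tfrac{d}{dt}\bigr)v\otimes t^{k+j}\in M\qquad\text{for all }j\in\Z .
$$
Now expand $\lambda^k e^{kt}-1 = (\lambda^k-1)+\sum_{\ell\ge 1}\frac{\lambda^k k^\ell}{\ell!}t^\ell\in\C[[t]]$ and recall that $W\in\mathcal{O}_{\mathfrak{W}}$, so by Condition A only finitely many of the operators $t^\ell\frac{d}{dt}=d_{\ell-1}$ act nontrivially on the given element $v$; say $d_i v=0$ for $i\ge N$. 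Thus for each fixed $v$ the element $\bigl((\lambda^k e^{kt}-1)\tfrac{d}{dt}\bigr)v$ is a \emph{finite} sum
$$
(\lambda^k-1)\,\tfrac{d}{dt}v \;+\; \sum_{\ell=1}^{N}\frac{\lambda^k k^\ell}{\ell!}\,d_{\ell-1}v ,
$$
a polynomial-in-$k$ combination, with coefficients $\lambda^k k^\ell$, of the fixed vectors $d_{-1}v, d_0 v,\dots, d_{N-1}v\in W$. Writing $\tfrac{d}{dt}v = d_{-1}v$ and grouping, this has exactly the shape of the left side of \eqref{T} in Proposition \ref{Key-computation} (after shifting the $t$-exponent, which is harmless since $M$ is $\C[t,t^{-1}]$-graded-stable only in the weak sense that each homogeneous component lies in $M$; more precisely one works componentwise). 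Here the single base $\lambda_1=\lambda$ appears together with the base $1$ — since $\lambda\ne 1$ these are distinct, which is exactly where the hypothesis $\lambda\ne 0,1$ is used — and the ``ambient subspace'' plays the role of $P_1 = \{w\in W : w\otimes t^r\in M\ \forall r\}=\hat M$ on the appropriate graded component. Applying Proposition \ref{Key-computation} (with $P=W$, $P_1=\hat M$, the two exponential bases $\lambda$ and $1$, and the polynomial coefficients of degrees $0,1,\dots,N$) forces each coefficient vector into $\hat M$; in particular $d_i v\in\hat M$ for $i=-1,0,\dots,N-1$, and $d_i v=0\in\hat M$ for $i\ge N$. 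Hence $d_i\hat M\subseteq\hat M$ for all $i\ge -1$, so $\hat M$ is a $\mathfrak{W}$-submodule.

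Since $\hat M\ne 0$ and $W$ is simple, $\hat M = W$, and therefore $M\supseteq W\otimes\C[t,t^{-1}]=\mathcal{L}(W,\lambda,a,b)$, giving equality. The one delicate bookkeeping issue I expect to be the real work is the interaction of the two gradings: $M$ need not be a graded submodule, so I cannot directly say ``$\bigl((\lambda^k e^{kt}-1)\tfrac{d}{dt}\bigr)v\otimes t^{k+j}\in M$ implies its homogeneous pieces are in $M$.'' The fix is that $\bigl((\lambda^k e^{kt}-1)\tfrac{d}{dt}\bigr)v\otimes t^{k+j}$ already lives in the single weight space $W\otimes t^{k+j}$ (the exponential series only rescales inside $W$, it does not move the $t$-exponent), so no homogeneous decomposition is needed — the expression is automatically $d_0$-homogeneous of weight $\a+k+j$. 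With that observation the application of Proposition \ref{Key-computation} is clean: one fixes $v\in\hat M$, lets $j$ range over $\Z$ with $k$ as the summation variable in \eqref{T} after re-indexing $m=k$, and reads off membership of each $d_i v$ in $\hat M$. The only other thing to check carefully is that the degree of the polynomial coefficient attached to $\lambda^k$ can be arranged to match the hypothesis $\deg f_{i,j}=j$ of Proposition \ref{Key-computation}; this is standard, since $\{1,k,\dots,k^N\}$ and $\{k^\ell\}$ span the same space of polynomials of degree $\le N$, and one may relabel so that for each degree $j$ there is exactly one coefficient vector $v_{i,j}$ (absorbing the factorials and the constant $(\lambda^k-1)$ into the $v_{i,j}$'s).
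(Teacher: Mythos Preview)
Your argument is correct and is essentially the paper's own proof: both show $\hat M$ is a $\mathfrak W$-submodule by expanding $d_m\cdot(v\otimes t^{l-m})$ (equivalently your $d_k\cdot(v\otimes t^j)$) as a finite $\lambda^m m^i$-combination and invoking Proposition~\ref{Key-computation} with the two bases $1$ and $\lambda$, using $\lambda\ne 1$. Your detour worrying about whether $M$ is graded is unnecessary (any $\Vir$-submodule is automatically $d_0$-graded), but you resolve it correctly anyway; the only cosmetic difference is that the paper fixes the target weight space via the shift $t^{l-m}$ and applies the proposition for each $l$, whereas you first vary $j$ to land in $\hat M$ and then apply the proposition once with $P_1=\hat M$.
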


\begin{proof} For any $v\in \hat{M}$, $l\in\Z$, we know that
$$d_{m}\cdot (v\otimes t^{l-m}) =(\sum_{i=0}^{\infty} \lambda^m
  m^{i}\frac{d_{i-1} v}{i!})\otimes t^l+(a+l-d_{-1}) v\otimes t^l$$ $$+ m(b-1) v\otimes
  t^l\in
  M, \forall m\in \Z.$$ From Proposition \ref{Key-computation},
  we have $d_i v \otimes t^l\in M$ for all $i\ge -1$ and $l\in \Z$,
  i.e., $d_i v\in \hat{M}$ for all $i\ge -1$. So
  $\hat{M}$ is a nonzero submodule of the simple $\mathfrak{W}$-module $W$, which has to be
  $W$. Therefore $M=\mathcal
{L}(W,\lambda,a,b)$.
 \end{proof}

Now we are ready to determine   necessary and sufficient conditions
for the weight Virasoro module $\mathcal {L}(W, \lambda,a,b)$ to be
simple.

\begin{theorem}\label{weight-simplicity}Let $\lambda\in \C^*, a, b\in
\C$, and $W\in \mathcal{O}_{\mathfrak{W}}$ be nontrivial simple.
\begin{enumerate}[$($a$).$] \item If $W$ is a highest weight module  with highest weight
$b'$, then the Virasoro module $\mathcal {L}(W,\lambda,a,b)$ is
simple if and only if  $\lambda\ne 1, -1$ and $b\ne 1$; or
$\lambda=-1$, $b\ne 1$ and $b\ne b'+1$.
\item If $W$ is not a highest weight module, then $\mathcal
{L}(W,\lambda,a,b)$ is simple if and only if $\lambda\ne 1$ and
$b\ne 1$.
\item If $W$ is a highest weight module with highest weight $b'\ne 0$,
then
\newline $\mathcal {L}_0(a,b')$ and
$\mathcal {L}_1(a,b')$ defined in Lemma \ref{b=b'+1} are the only
two nontrivial submodules of $\mathcal {L}(W,-1,a,b'+1) $, which are
simple.\end{enumerate}
\end{theorem}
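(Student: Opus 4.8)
The plan is to treat the three parts of Theorem~\ref{weight-simplicity} by first reducing to the cases already analyzed in the lemmas and then pinning down exactly when those distinguished submodules are proper. For the \emph{sufficiency} direction in (a) and (b), I would assume $\lambda\ne 1$ and $b\ne 1$ (and in the highest-weight case additionally $\lambda\ne -1$, or $\lambda=-1$ with $b\ne b'+1$), take an arbitrary nonzero submodule $M\subseteq\mathcal L(W,\lambda,a,b)$, and show $M$ is everything. Pick $0\ne v=\sum_{n} w_n\otimes t^n\in M$ with a minimal number of nonzero terms $w_n\in W$. Apply $d_m$ for all $m\in\Z$ and expand $d_m\cdot(w_n\otimes t^n)$ using $e^{mt}=\sum_k (mt)^k/k!$ exactly as in the proofs of Lemmas~\ref{lambda=1}, \ref{b=1}, \ref{lemma-19}: this produces, after fixing a target degree, an expression of the form $\sum_i \lambda_i^m f_i(m)\,(\text{something in }W)\otimes t^{\ell}\in M$ with finitely many distinct $\lambda_i$'s among $\{\lambda^{?}\}$ and polynomial coefficients of controlled degree. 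Invoking Proposition~\ref{Key-computation} (with $P=W\otimes t^\ell$ and $P_1=M\cap(W\otimes t^\ell)$), I extract that various $d_i v$-type vectors lie in $M$, and by the minimality argument conclude that $M$ contains some $v\otimes\C[t,t^{-1}]$ with $v\ne0$; then Lemma~\ref{lemma-19} finishes when $\lambda\ne 0,1$, while the cases $\lambda=1$ and $\lambda=-1$ are handled by invoking Lemma~\ref{lambda=1} together with Theorem~4 of \cite{LLZ}, and the $b=1$, $\lambda=-1$, $b=b'+1$ exceptional cases respectively.

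For the \emph{necessity} direction, the point is to exhibit a proper nonzero submodule whenever one of the forbidden equalities holds. If $b=1$, Lemma~\ref{b=1} gives the proper submodule $\mathcal L'(W,\lambda,a,1)$ (proper because the quotient $\mathcal N((\Soc_{\mathfrak b}W)_{(1)},a)$ is nonzero). If $\lambda=1$, Lemma~\ref{lambda=1} exhibits the proper submodule $W^{(0)}\otimes\C[t,t^{-1}]$, which is proper precisely because $W$ is nontrivial so $W^{(0)}=\Soc_{\mathfrak b}W\subsetneq W$ (using $W\cong\Ind_{\mathfrak b}^{\mathfrak W}\Soc_{\mathfrak b}W$ from Lemma~\ref{char-O(w)}). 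If $\lambda=-1$, $W$ is highest weight with highest weight $b'\ne 0$, and $b=b'+1$, then Lemma~\ref{b=b'+1} provides the decomposition $\mathcal L(W,-1,a,b'+1)=\mathcal L_0(a,b')\oplus\mathcal L_1(a,b')$ into two nonzero summands. The remaining subtlety for (a) is the highest-weight case with $b'=0$: there $\lambda=-1$ does \emph{not} force reducibility, which is why the statement of (a) distinguishes $\lambda=-1$ only by the condition $b\ne b'+1$; I would note that when $b'=0$ the module $W$ is the adjoint-type highest weight module and check that the argument giving simplicity for generic $\lambda$ still applies (the obstruction in Lemma~\ref{b=b'+1} used $b'\ne0$ to guarantee $W=\C[d_{-1}]w_0$).

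For part (c), assuming $b'\ne0$ and $\lambda=-1$, $b=b'+1$, I already have from Lemma~\ref{b=b'+1} that $\mathcal L_0(a,b')$ and $\mathcal L_1(a,b')$ are submodules and that their direct sum is everything. It remains to show (i) each of $\mathcal L_0,\mathcal L_1$ is simple, and (ii) these are the only nontrivial proper submodules. For (i), I would run the same minimal-length-plus-Proposition~\ref{Key-computation} argument inside $\mathcal L_0(a,b')$: a nonzero submodule must, after applying enough $d_m$'s, contain a vector of the form (polynomial in $d_{-1}$)$\,w_0\otimes t^{\text{even}}$ of minimal $d_{-1}$-degree, and using the explicit action $d_1\cdot(f(d_{-1})w_0\otimes t^k)=(-2d_{-1}+a+k+b+\sum_{j\ge1}d_{j-1}/j!)f(d_{-1})w_0\otimes t^{k+1}$ together with the $d_{2k}$-action one lowers the degree and sweeps through all degrees and all even $t$-powers, forcing the submodule to be all of $\mathcal L_0$; same for $\mathcal L_1$. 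For (ii), given any nonzero submodule $N$, projecting onto the two summands and using their simplicity shows $N$ is $0$, $\mathcal L_0$, $\mathcal L_1$, or the whole module; one must only rule out that $N$ could be a ``diagonal'' submodule, which is excluded because $\mathcal L_0$ and $\mathcal L_1$ are non-isomorphic (they live on disjoint sets of weights: $\mathcal L_i(a,b')$ has weights in $a+\i+\Z$ with $\i$-parity... — more precisely their $d_0$-eigenvalues differ, since $\mathcal L_0$ is supported on $a+2\Z$-shifted weights and $\mathcal L_1$ on the complementary coset).

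I expect the main obstacle to be the bookkeeping in the sufficiency direction: correctly identifying, in the expansion of $\sum_m\big(d_m\cdot v\big)\cdot(\text{shift})$, which $\lambda_i$'s and which polynomial degrees occur so that Proposition~\ref{Key-computation} applies cleanly, and then organizing the descending induction on the ``length'' of $v$ so that it terminates with a vector of the form $v_0\otimes t^{\ell}$ rather than stalling. The special small cases — $b'=0$ in part (a), and excluding diagonal submodules in part (c) — are where I would have to be most careful, but each is handled by a short ad hoc observation rather than new machinery.
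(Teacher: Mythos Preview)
Your outline has the right shape, but it is missing the single technical idea that makes the sufficiency argument work, and without it the proof stalls at the first step.

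You propose to pick a nonzero weight vector $v=w\otimes t^{i_0}\in M$ (since the module is a weight module, the ``minimal number of terms'' reduction is automatic: every submodule is $d_0$-graded, so $v$ can be taken in a single $W\otimes t^{i_0}$) and then apply $d_m$ for varying $m$. But $d_m\cdot(w\otimes t^{i_0})$ lands in $W\otimes t^{m+i_0}$, a weight space that \emph{moves with $m$}. Proposition~\ref{Key-computation} requires all of the expressions $\sum_i \lambda_i^m f_i(m)\,v_i$ to lie in a \emph{fixed} subspace $P_1$ as $m$ ranges over $\Z$; you cannot take $P_1=M\cap(W\otimes t^\ell)$ for a fixed $\ell$ if the target degree changes with $m$. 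Lemma~\ref{lemma-19} avoids this by computing $d_m\cdot(v\otimes t^{l-m})$, but that already assumes $v\otimes \C[t,t^{-1}]\subset M$, which is exactly what you are trying to produce. The paper's proof resolves this by applying the \emph{second-order} operator $d_{l-m}d_m$ to the single weight vector $v$: the total degree shift is $l$, independent of $m$, so everything lands in $W\otimes t^{l+i_0}$ and Proposition~\ref{Key-computation} applies. Expanding $d_{l-m}d_m\cdot v$ then produces three families of terms with growth $1^m$, $\lambda^m$, $\lambda^{-m}$ (this is where the trichotomy $\lambda\ne\pm1$, $\lambda=-1$, $\lambda=1$ enters), and the top $m$-degree coefficients are exactly the expressions \eqref{highest 1}--\eqref{47} in which the factors $(1-b)$ and $d_r$ appear. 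This is the missing engine; once you have it, the case analysis you sketch goes through.

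Two smaller points. First, your parenthetical about $b'=0$ is moot: a nontrivial simple highest weight $\mathfrak{W}$-module necessarily has $b'\ne 0$, so that case never arises in part~(a). Second, in part~(c) your claim that $\mathcal L_0(a,b')$ and $\mathcal L_1(a,b')$ have disjoint weight supports is false: $d_1^i\cdot(w_0\otimes t^{2j})\in W\otimes t^{2j+i}$, so each $\mathcal L_\epsilon$ meets every weight space. The paper instead shows directly (via the same $d_{l-m}d_m$ computation) that \emph{any} nonzero submodule $M$ contains some $w_0\otimes t^{i_0'}$, hence contains $\mathcal L_0$ or $\mathcal L_1$ according to the parity of $i_0'$; combined with the direct sum decomposition of Lemma~\ref{b=b'+1}, this immediately gives both simplicity and the classification of submodules, with no appeal to non-isomorphism needed.
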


\begin{proof} In $\mathcal {L}(W,\lambda,a,b)$, for any $w\in W, l, j, m\in \Z$, we have
\begin{equation}\label{4.2}\aligned &d_{l-m}d_m \cdot (w\otimes t^j) \\=&((\lambda^{l-m} e^{(l-m)t}\frac{d}{d t}-\frac{d}{d
t}+a+(l-m)b+m+j)\\&(\lambda^m e^{mt}\frac{d}{d t}-\frac{d}{d
t}+a+mb+j))w\otimes t^{j+l}\\=&(\lambda^l(e^{(l-m)t}\frac{d}{d
t})(e^{mt}\frac{d}{d t})+(-\frac{d}{d
t}+a+(l-m)b+m+j)\\&(-\frac{d}{d t}+a+mb+j))w \otimes
t^{l+j}\\&+\lambda^{-m}\lambda^{l} (-\frac{d}{d
t}+a+m(b-1)+j+l)(e^{(l-m)t}\frac{d}{d t})w \otimes
t^{l+j}\\&+\lambda^m(-\frac{d}{d t}+a+lb+(1-b)m+j) (e^{mt}\frac{d}{d
t})w \otimes t^{l+j}.
\endaligned\end{equation}
Let $M$ be a nonzero submodule of $\mathcal {L}(W,\lambda,a,b)$.
Take a nonzero
$$v=(\sum_{i=1}^s d_{-1}^i w_i)\otimes t^{i_0}\in M$$ with $w_s\ne 0$, $w_i\in
\Soc_{\mathfrak{b}}(W)$, and
$r=\ord_{\mathfrak{b}}(\Soc_{\mathfrak{b}}(W))\ge0$. From Lemma
\ref{char-O(b)} (c) we know that $d_r$ is bijective on the simple
$\mathfrak{b}$ module $\Soc_{\mathfrak{b}}(W)$.

Note that $$(e^{mt}\frac d{dt})(d_{-1}^i
w_i)=(d_{-1}-m)^i(e^{mt}\frac d{dt}) w_i,$$ $$(e^{(l-m)t}\frac{d}{d
t})(e^{mt}\frac d{dt})(d_{-1}^i
w_i)=(d_{-1}-l)^i(e^{(l-m)t}\frac{d}{d t})(e^{mt}\frac d{dt}) w_i,$$
and $d_{r+k} w_i=0$ for all $k\in\N$.  From (\ref{4.2}), we may
write
\begin{equation}\label{expand}\aligned d_{l-m}d_m \cdot (v)&=\sum_{i=0}^{2r+2} m^i v_{1,i}^{(l,i_0)}\otimes t^{l+i_0}\\
&+\sum_{i=0}^{r+s+2} \lambda^{-m} m^i v_{\frac{1}{\lambda},
i}^{(l,i_0)}\otimes t^{l+i_0}\\&+\sum_{i=0}^{r+s+2}\lambda^m m^i
v_{\lambda,i}^{(l,i_0)}\otimes t^{l+i_0}\in
M,\endaligned\end{equation} where
$v_{1,i}^{(l,i_0)},v_{\frac{1}{\lambda},
i}^{(l,i_0)},v_{\lambda,i}^{(l,i_0)}\in W$ are independent  of $m$.
In particular, %$v_{\lambda, r+s+2}^{(l,i_0)}$ comes from the term
%$\lambda^m(1-b)m (e^{mt}\frac{d}{d t})d_{-1}^sw_s$,
% and
\begin{equation}\label{highest 1}v_{\lambda,
r+s+2}^{(l,i_0)}=\frac{(1-b)(-1)^s}{(r+1)!} d_r w_s, \forall l\in
\Z,\end{equation}
\begin{equation}\label{45}v_{\frac1{\lambda},
r+s+2}^{(l,i_0)}=\frac{(-1)^{r+s}(1-b)\lambda^l}{(r+1)!} d_r w_s,
\forall l\in \Z.\end{equation} And if $r>0$,
\begin{equation}\label{47}v_{1, 2r+s+2}^{(l,i_0)}=\frac{(-1)^{r+1}
\lambda^l}{((r+1)!)^2}(d_{-1}-l)^sd_r^2 w_s, \forall l\in
\Z.\end{equation}

\begin{case} $\lambda\ne 1, -1$ and $b\ne 1$.\end{case}

From (\ref{expand}), (\ref{highest 1}) and Proposition
\ref{Key-computation}, we have
$$d_rw_s\otimes t^{l+i_0}\in  M, \forall l\in \Z.$$

From Lemma \ref{lemma-19}, we have $M=\mathcal {L}(W,\lambda,a,b)$.
Thus $\mathcal {L}(W,\lambda,a,b)$ is simple in this case.

\begin{case} $\lambda=-1$ and $b\ne 1$.\end{case}

From (\ref{highest 1}) and (\ref{45}) we see that
\begin{equation}\label{highest2}
v_{-1,r+s+2}^{(l,i_0)}+v_{\frac 1{-1},r+s+2}^{(l,i_0)}=\frac{(1-b)(-1)^s(1+(-1)^{l+r})}{(r+1)!}d_rw_s.\end{equation}

Thus from (\ref{expand}) and Proposition \ref{Key-computation} we
have
\begin{equation}\label{4.5} d_rw_s\otimes t^{r+s+2k+i_0}\in M,
\forall k\in \Z.\end{equation}

  Now replacing $v$ with $d_rw_s\otimes
t^{-r+i_0}$ if necessary, we may assume that
\begin{equation}\label{4.6}0\ne v=w\otimes t^{i_0+2k}\in
M, \,\,\forall\,\, k\in \Z\end{equation} where $w\in
\Soc_{\mathfrak{b}}(W).$ Now $s=0$ in
(\ref{expand})-(\ref{highest2}).

\begin{subcase}$W$ is not a highest weight module. \end{subcase}

 Note that in
this case we have $r>0$. Then (4.6) becomes %we have in (\ref{4.4}) that
$$v_{1,2r+2}^{(l,i_0)}
=\frac{(-1)^{l+r+1}}{((r+1)!)^2}d_r^2w.$$
So from Proposition \ref{Key-computation}, we have $$ (d_r^2w)\otimes
t^{l+i_0}\in  M, \forall l\in \Z.$$
 From Lemma \ref{lemma-19}, we
have $M=\mathcal {L}(W,\lambda,a,b)$. Thus $\mathcal
{L}(W,\lambda,a,b)$ is simple in this case.

\begin{subcase}$W$ is a highest weight module with highest weight $b'\ne 0$, and $b=b'+1$.\end{subcase}

In this case, we have $r=s=0$. From (\ref{4.6}) and the proof of
Lemma \ref{b=b'+1}, we have either $\mathcal {L}_0(a,b')\subset M$
if $i_0$ is even or $\mathcal {L}_1(a,b')\subset M$ if
$i_0$ is odd. 
Combining with Lemma \ref{b=b'+1} we see that $\mathcal {L}_0(a,b')$
and $\mathcal {L}_1(a,b')$ are the only two simple Virasoro
submodules in $\mathcal {L}(W,\lambda,a,b)$. And we also see that
${L}_0(a,b')$ and ${L}_1(a,b')$ are not isomorphic.

\begin{subcase}$W$ is a highest weight module with highest weight $b'\ne 0$, and $b\ne b'+1$.\end{subcase}

For this case,  we have $r=s=0$. In (\ref{4.6}),  $w$ is the highest
weight vector of $W$. For any $m, l\in\Z$ we have
 \begin{equation}\label{case2.3}\aligned d_{2m+1}\cdot& (w\otimes
  t^{i_0-2m+2l})\\&=2m(b-b'-1)w\otimes t^{i_0+2l+1}+\\ &\hskip 1cm (b-b'-2d_{-1}+a+i_0+2l)w\otimes
  t^{i_0+2l+1}\in M.\endaligned\end{equation} Since $b\ne b'+1$. we have $w\otimes
  t^{i_0+2l+1}\in M$ for all $l\in \Z$. Thus $w\otimes
  \C[t,t^{-1}]\subset M$. From Lemma \ref{lemma-19}, we
have $M=\mathcal {L}(W,\lambda,a,b)$. So $\mathcal
{L}(W,\lambda,a,b)$ is simple in this case.

We know that  $\mathcal {L}(W,\lambda,a,b)$ is not simple   if
$\lambda=1$ (Lemma \ref{lambda=1}), or if $b=1$ (Lemma \ref{b=1}).
So we have completed the proof.
\end{proof}

Note that, if $\C$ is the trivial $\mathfrak{W}$-module, then
$A_{a,b}=\mathcal {L}(\C,1,a,b)$ is the module of intermediate
series. See \cite{KR} and Example 2.

Applying Lemmas \ref{lambda=1}, \ref{b=1}, \ref{b=b'+1}, and Theorem
\ref{weight-simplicity} to  modules $E_h(b,\gamma, p)$ defined in
\cite{CM} (where we take the $\mathfrak{W}$-module $W$ as the Verma
module of highest weight $-\gamma$), we can completely determine the
structure of these modules.

\begin{corollary} Let $h, b, \gamma, p\in\C$.
\begin{enumerate}
 \item $E_h(b,\gamma, p)$ is simple if and only if $ e^h\ne 1,-1$, and $\gamma\ne 0, 1-p$; or $e^h=-1$, and $\gamma\ne 0, 1-p, \frac{1-p}{2}$.
 \item If $e^h=1$ and $\gamma\ne0$, then $E_h(b,\gamma,p)$ has a filtration
$$0=M^{(0)}\subset M^{(1)}\subset \cdots \subset M^{(n)}\subset
\cdots $$ with $M^{(i+1)}/M^{(i)}\cong A_{b,p}$ for all $i\in \Z_+$.
 \item If $e^h\ne 1$ and $\gamma\ne 0$, then $E_h(b,\gamma, 1-\gamma)$
has a proper submodule $M\cong E_h(b,\gamma,-\gamma)$ with
$E_h(b,\gamma, 1-\gamma)/M\cong A_{b, 1-\gamma}$.
 \item If $e^h=-1$, and $\gamma\ne 0$, then $E_h(b,\gamma,1-2\gamma)$
has exactly two simple submodules $E_{\pm}(b,\gamma,1-2\gamma)$, and
$$E_h(b,\gamma,1-2\gamma)=E_{+}(b,\gamma,1-2\gamma)\oplus
E_{-}(b,\gamma,1-2\gamma)$$ with $E_{+}(b,\gamma,1-2\gamma)\not
\cong E_{-}(b,\gamma,1-2\gamma)$.
 \item  If $e^h=-1$, then $E_h(b,0,1)$ has submodules $M_2\subset M_1$
with $M_1/M_2\cong A_{b,0}$, $E_h(b,0,1)/M_1\cong A_{b,1}$,
$M_1\cong E_h(b,1,0)$, $M_2\cong E_h(b,1,-1)\cong
E_{+}(b,1,-1)\oplus E_{-}(b,1,-1)$.
 \item  If $e^h=-1$, then $E_h(b,0,0)$ has a submodule  $M\cong
E_h(b,1,-1)$ and $E_h(b,0,0)/M\cong A_{b,0}$, and $M\cong
 E_{+}(b,1,-1)\oplus E_{-}(b,1,-1)$.
 \item Let $e^h\ne 1,-1$ and $p\ne 1$; or $e^h=-1$ and $p\ne 0,1$. Then
$E_h(b,0,p)$ has an simple submodule $M$ with $M\cong E_h(b,1,p-1)$
and $E_h(b,0,p)/M\cong A_{b,p}$.
 \item If $e^h\ne 1, -1$, then $E_h(b,0,1)$ has submodules $M_2\subset
M_1$ with $M_1\cong E_h(b,1,0)$, $M_2\cong E_h(b,1,-1)$,
$M_1/M_2\cong A_{b,0}$, and $E_h(b,0,1)/M_1\cong A_{b,1}$, where
$M_2$ is simple.
\end{enumerate}
\end{corollary}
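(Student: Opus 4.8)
The plan is to translate every statement about the modules $E_h(b,\gamma,p)$ into a statement about the modules $\mathcal{L}(W,\lambda,a,b)$ via the dictionary established in Section~3.2, where $W$ is the Verma $\mathfrak{W}$-module of highest weight $b'=-\gamma$, $\lambda=e^h$, and the parameters match as $E_h(a,-b',b+b')$; in the notation of the Corollary this means $E_h(b,\gamma,p)=\mathcal{L}(W,e^h,b,p+\gamma-1+1)$—one should first write down precisely the parameter correspondence (solving $a=b$, $-b'=\gamma$, $b+b'=p$, so the ``$b$''-parameter of $\mathcal{L}$ equals $p+\gamma$ and the highest weight $b'=-\gamma$) and record that $\mathcal{L}$ is simple iff $E_h$ is simple, a submodule of one corresponds to a submodule of the other, and $A_{a,b}=\mathcal{L}(\C,1,a,b)$. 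With this dictionary fixed, part (1) is immediate from Theorem~\ref{weight-simplicity}(a): ``$\lambda\ne1,-1$ and $b_{\mathcal L}\ne1$'' becomes ``$e^h\ne1,-1$ and $p+\gamma\ne1$'' i.e.\ $\gamma\ne1-p$, together with $\gamma\ne0$ (nontriviality of the Verma module $W$, which holds iff $b'=-\gamma\ne0$; when $\gamma=0$ the module $W$ is trivial and $\mathcal L$ reduces to an intermediate series module, never simple as a candidate ``new'' module here), and the $\lambda=-1$ case forces in addition $b_{\mathcal L}\ne b'+1$, i.e.\ $p+\gamma\ne 1-\gamma$, i.e.\ $\gamma\ne\frac{1-p}{2}$.

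For part (2), $e^h=1$ means $\lambda=1$, so I invoke Lemma~\ref{lambda=1} directly: the filtration $W^{(n)}\otimes\C[t,t^{-1}]$ has successive quotients $\mathcal{N}((\Soc_{\mathfrak b}W)_{(b_{\mathcal L})},a)$, and since $W$ is a Verma module with $b'=-\gamma\ne0$ its socle is one-dimensional (the highest weight line), so $\mathcal{N}(\cdot,a)$ collapses to the intermediate series module $A_{a,b_{\mathcal L}}=A_{b,p}$—this last identification needs a one-line check that the $\mathfrak b$-action on a one-dimensional socle shifted by $b_{\mathcal L}$ produces exactly the intermediate-series action with the claimed parameters. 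For part (3), $e^h\ne1$ and $\gamma\ne0$ with $p=1-\gamma$ means $b_{\mathcal L}=p+\gamma=1$, so Lemma~\ref{b=1} applies: $\mathcal{L}'(W,\lambda,a,1)\cong\mathcal{L}(W,\lambda,a,0)$, which under the dictionary is $E_h(b,\gamma,-\gamma)$ (decreasing $p$ by one corresponds to decreasing $b_{\mathcal L}$ by one), and the quotient is $\mathcal{N}((\Soc_{\mathfrak b}W)_{(1)},a)=A_{b,1-\gamma}$ again by one-dimensionality of the socle. Part (4) is $e^h=-1$, $\gamma\ne0$, $p=1-2\gamma$, so $b_{\mathcal L}=1-\gamma=b'+1$ since $b'=-\gamma$; now Lemma~\ref{b=b'+1} and Theorem~\ref{weight-simplicity}(c) give the decomposition $\mathcal{L}_0(a,b')\oplus\mathcal{L}_1(a,b')$ into two non-isomorphic simple submodules, which I name $E_{\pm}(b,\gamma,1-2\gamma)$.

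Parts (5)--(8) are the ``composite'' cases where $\gamma=0$, so $W$ is the Verma module of highest weight $0$; these require combining two of the lemmas in sequence. The key observation is that for $\gamma=0$ the Verma module $W$ of highest weight $0$ is \emph{not} simple—it has the one-dimensional trivial submodule $\C w_0$ spanned by... wait, no: $d_{-1}w_0$ generates a proper submodule and $W/\C w_0$-type analysis is needed; more precisely when $b'=0$ the highest weight vector $w_0$ satisfies $d_1 w_0 = 0$ and $d_0 w_0=0$, so $\C w_0$ is a trivial $\mathfrak b$-submodule but $d_{-1}w_0\ne0$, and one checks $\mathfrak{W}$ acts making $W$ an extension with trivial sub or quotient—this is the technical heart. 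For part (7), $\gamma=0$ and $b_{\mathcal L}=p$: I first use Lemma~\ref{b=b'+1} is unavailable ($b'=0$), so instead I think of $E_h(b,0,p)$ with highest-weight-$0$ Verma $W$, take the submodule $M$ corresponding to ``$d_{-1}W$'' shifted appropriately; under the dictionary $d_{-1}$ lowers the highest weight slot, and I claim $M\cong E_h(b,1,p-1)$ via an explicit isomorphism $w\otimes t^n\mapsto d_{-1}w\otimes t^n$ type map, with quotient $A_{b,p}$, then invoke Theorem~\ref{weight-simplicity}(b) for simplicity of $M$ since the highest-weight-$1$ Verma is now genuinely... actually highest weight $1\ne0$ so it is nontrivial, but it is still a highest weight module, so Theorem~\ref{weight-simplicity}(a) applies and gives simplicity precisely under the stated hypotheses $e^h\ne1,-1,p\ne1$ or $e^h=-1,p\ne0,1$. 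For parts (5),(6),(8) I iterate: $E_h(b,0,1)$ contains $M_1\cong E_h(b,1,0)$ (apply part-(7)-type reasoning at $p=1$), and $M_1=E_h(b,1,0)$ in turn, having highest weight $b'=-1$ and $b_{\mathcal L}=0$... here if $e^h=-1$ then since $b_{\mathcal L}=0=b'+1$ (as $b'=-1$) Lemma~\ref{b=b'+1}/Theorem~\ref{weight-simplicity}(c) splits $M_1$'s submodule $M_2\cong E_h(b,1,-1)$ further into $E_+\oplus E_-$, whereas if $e^h\ne\pm1$ then $M_2\cong E_h(b,1,-1)$ stays simple by Theorem~\ref{weight-simplicity}(a). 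The main obstacle I anticipate is \textbf{bookkeeping}: getting the parameter translation $E_h(b,\gamma,p)\leftrightarrow\mathcal{L}(W,e^h,\cdot,\cdot)$ exactly right (including the off-by-one in the ``$p$ versus $b_{\mathcal L}$'' slot and which slot the highest weight $b'$ sits in), and then carefully tracking, in the $\gamma=0$ composite cases, \emph{which} submodule of the non-simple Verma $W$ of highest weight $0$ gives rise to each $M_i$ and verifying the claimed isomorphisms $M_i\cong E_h(b,\cdot,\cdot)$ by writing the explicit intertwiner and checking it on $d_n$. Once the dictionary and the highest-weight-$0$ Verma structure are nailed down, each individual item is a direct citation of one of Lemmas~\ref{lambda=1}, \ref{b=1}, \ref{b=b'+1} or Theorem~\ref{weight-simplicity}.
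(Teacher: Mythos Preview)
Your approach is essentially the same as the paper's: translate via the dictionary $E_h(b,\gamma,p)\cong\mathcal{L}(W,e^h,b,\gamma+p)$ with $W$ the Verma $\mathfrak{W}$-module of highest weight $-\gamma$, then cite Theorem~\ref{weight-simplicity}(a), Lemma~\ref{lambda=1}, Lemma~\ref{b=1}, and Lemma~\ref{b=b'+1} for parts (1)--(4) respectively, and combine these for the $\gamma=0$ cases (5)--(8).

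One correction worth making in your write-up: when $\gamma=0$ the Verma module $W$ of highest weight $0$ is \emph{not} trivial (it is infinite-dimensional); rather, it is non-simple, with a unique simple $\mathfrak{W}$-submodule $W'$ generated by $d_{-1}w_0$, of highest weight $-1$, and trivial one-dimensional quotient $W/W'$. The paper handles (5)--(8) cleanly by noting that $\mathcal{L}(W',e^h,b,p)\subset\mathcal{L}(W,e^h,b,p)$ is a submodule with quotient $A_{b,p}$, and that $\mathcal{L}(W',e^h,b,p)\cong E_h(b,1,p-1)$; then one simply reapplies the earlier lemmas to $\mathcal{L}(W',\ldots)$. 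Your ``$d_{-1}W$'' idea is exactly this $W'$, but you should state it this way rather than via an ad hoc intertwiner. Also, in your part~(5) analysis you wrote $b_{\mathcal L}=0$ for $M_1\cong E_h(b,1,0)$, but in fact $b_{\mathcal L}=\gamma+p=1+0=1$, so Lemma~\ref{b=1} (not Lemma~\ref{b=b'+1}) applies first to produce $M_2\cong E_h(b,1,-1)$, and only then does Lemma~\ref{b=b'+1} split $M_2$; this is precisely the bookkeeping hazard you anticipated.
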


\begin{proof} From  Sect.3.2, we know that if $\gamma\ne 0$, then
 $E_h(b,\gamma,p)\cong \newline\mathcal {L}(W,e^h,b,\gamma+p)$
where $W$ is the simple highest weight $\mathfrak{W}$-module with
highest weight $-\gamma$.

Part (1) follows directly from Theorem \ref{weight-simplicity}(a),
Part (2) follows directly from Lemma \ref{lambda=1}, Part (3) follows directly from Lemma \ref{b=1},
and Part (4) follows directly from Lemma \ref{b=b'+1}.

Now let  $\gamma=0$,  let $W$ be the Verma $\mathfrak{W}$-module
with highest weight $0$, and let $W'$ be the unique simple submodule
of $W$ with highest weight $-1$. (So $\gamma=1$ for $W'$). We have
$E_h(b,0,p)\cong \mathcal {L}(W,e^h,b,p)$. Note that in this case
$\mathcal {L}(W,e^h,b,p)$ has a submodule $$M\cong \mathcal{L}(W',
e^h,b,p)\cong E_h(b,1,p-1)$$ with $\mathcal {L}(W,e^h,b,p)/M\cong
A_{b,p}$.

(5). We see that $E_h(b,0,1)\cong \mathcal{L}(W, -1,b,1)$. Then $E_h(b,0,1)$ has submodule $M_1
\cong E_h(b,1,0)\cong \mathcal{L}(W', -1,b,1)$. Applying Lemmas \ref{b=1}, \ref{b=b'+1}, we see that $M_1$ has a submodule $M_2\cong E_h(b,1,-1)\cong
E_{+}(b,1,-1)\oplus E_{-}(b,1,-1)$. Also $M_1/M_2\cong A_{b,0}$.

(6). We see that $E_h(b,0,0)\cong \mathcal{L}(W, -1,b,0)$. Then $E_h(b,0,0)$ has submodule $M
\cong E_h(b,1,-1)\cong \mathcal{L}(W', -1,b,0)$. Applying Lemma \ref{b=b'+1}, we see that $M$ has  submodules $M\cong E_h(b,1,-1)\cong
E_{+}(b,1,-1)\oplus E_{-}(b,1,-1)$. Also $M_1/M_2\cong A_{b,0}$.

Part (7) follows from  Theorem \ref{weight-simplicity}(a), while Part (8) follows from  Lemma \ref{b=1}.
\end{proof}

\begin{example} Let $\mu=(\mu_1,\mu_2)\in \C^2, \lambda \in \C^*$. Let $W_{\mu}$ be as in Example 1. Then
$W_{\mu}$ is simple if and only if $\mu_1$ or $\mu_2$ is nonzero. We
can easily see that $W_{\mu}=\C[ d_{-1}]\otimes \C[d_0]$. For any
$\lambda, a, b\in \C$ with $b\ne 1$ and $\lambda\notin\{0,1\}$, we
obtain the simple weight Virasoro module
$V(\mu_1,\mu_2,\lambda,a,b)=\C[d_{-1},d_0]\otimes \C[t,t^{-1}]$ with
the action $$ z\cdot V(\mu_1,\mu_2,\lambda,a,b)=0,$$ $$d_m\cdot
(d_{-1}^id_0^j\otimes t^k)\hskip 5cm $$
$$=\lambda^m(d_{-1}-m)^i(d_{-1}d_0^j+md_0^{j+1}+\frac{m^2}{2}\mu_1(d_0-1)^j+\frac{m^3}{6}\mu_2(d_0-2)^j)\otimes
t^{k+m}$$
$$ +(-d_{-1}+a+k+bm)d_{-1}^id_0^j\otimes t^{k+m}$$ for all  $i,
j\in\Z_+$ and $ k, m\in \Z.$
\end{example}

\section{Isomorphism classes of Virasoro modules $\mathcal {L}(W,
\lambda,a,b)$}

In this section, we will determine the isomorphism classes between
the simple weight modules we have obtained from the weight Virasoro
module $\mathcal {L}(W, \lambda,a,b)$.

It is easy to see that $\mathcal{L}(W,\lambda, a,b)\cong
\mathcal{L}(W,\lambda,a+n,b)$ for all $n\in \Z$. Thus without lose
of generality, we may assume that $0\le {\rm Re} a<1$.

Let $W\in \mathcal{O}_{\mathfrak{W}}$ be a highest weight module
with the highest weight vector $w_0$ of the highest weight $b'\ne
0$. Then we know that $W=\C[d_{-1}]\otimes w_0$. For the simple
submodules $\mathcal{L}_0(a,b'), \mathcal{L}_1(a,b')$ of   $\mathcal
{L}(W, \lambda,a,b'+1)$  defined in Lemma \ref{b=b'+1}, it is not
hard to check that for all $n\in \Z$, $$\mathcal{L}_0(a,b')\cong
\mathcal{L}_0(a_1+2n,b'),$$
$$\mathcal{L}_1(a,b')\cong
\mathcal{L}_0(a+1,b').$$

The following lemma gives some isomorphisms between two different Virasoro modules $\mathcal {L}(W, \lambda,a,b)$.

\begin{lemma}\label{iso} Let $\lambda\in \C^*$ and $a, b', b_0'\in \C$. Let $W, W_0\in \mathcal{O}_{\mathfrak{W}}$ be the Verma modules with
 highest weight vectors $w, w_0$ of the highest weights $b', b'_0$ respectively.
Then the linear map $$\aligned \phi: \,\,\,&\mathcal {L}(W,
\lambda,a,b_0'+1)\to \mathcal {L}(W_0, \lambda^{-1},a,b'+1),\\ &
(d_{-1}^kw)\otimes t^l\mapsto \lambda^{-l}((a+l-d_{-1})^kw_0)\otimes
t^l,\endaligned$$ is a $\Vir$-module isomorphism.\end{lemma}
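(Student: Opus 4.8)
The plan is to verify directly that $\phi$ intertwines the two Virasoro actions; since $\phi$ is manifestly a bijective linear map (it sends the basis $\{(d_{-1}^kw)\otimes t^l\}$ to the basis $\{((a+l-d_{-1})^kw_0)\otimes t^l\}$ of $\mathcal{L}(W_0,\lambda^{-1},a,b'+1)$, because $\{(a+l-d_{-1})^k w_0\}_{k\in\Z_+}$ is again a $\C$-basis of the polynomial space $\C[d_{-1}]\otimes w_0$ for each fixed $l$), establishing $\phi(d_m\cdot v)=d_m\cdot\phi(v)$ for all $m,l\in\Z$, $k\in\Z_+$ will finish the proof. The natural strategy is \emph{not} to carry the $d_{-1}^k$ through by hand, but to use the operator identity already exploited in Sect.~3.2: on the Verma module, $e^{mt}\tfrac{d}{dt}$ acting on $f(d_{-1})w$ equals $f(d_{-1}-m)(mb'+d_{-1})w$, so one can write the action of $d_m$ on $\mathcal{L}(W,\lambda,a,b_0'+1)$ purely in terms of left multiplication by polynomials in $d_{-1}$, and likewise for $\mathcal{L}(W_0,\lambda^{-1},a,b'+1)$.

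The key computational step: fix the substitution conventions. On $\mathcal{L}(W,\lambda,a,b_0'+1)$ I would record
$$d_m\cdot(f(d_{-1})w\otimes t^l)=\bigl(\lambda^m f(d_{-1}-m)(mb_0'+d_{-1})-f(d_{-1})d_{-1}+(a+mb_0'+m+l)f(d_{-1})\bigr)w\otimes t^{l+m},$$
exactly as in Sect.~3.2 with $b$ replaced by $b_0'+1$. Applying $\phi$ replaces $d_{-1}$ by $a+l+m-d_{-1}$ in the polynomial coefficient, tensors with $\lambda^{-(l+m)}t^{l+m}$, and one gets an expression in $\C[d_{-1}]w_0\otimes t^{l+m}$. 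On the other side, $\phi(d_{-1}^k w\otimes t^l)=\lambda^{-l}(a+l-d_{-1})^k w_0\otimes t^l$, so with $g(x)=(a+l-x)^k$ I compute $d_m\cdot\phi(d_{-1}^k w\otimes t^l)$ using the same formula but with parameters $\lambda^{-1}$, $b'+1$, highest weight $b_0'$ replaced by $b'$:
$$d_m\cdot(\lambda^{-l}g(d_{-1})w_0\otimes t^l)=\lambda^{-l}\bigl(\lambda^{-m}g(d_{-1}-m)(mb'+d_{-1})-g(d_{-1})d_{-1}+(a+mb'+m+l)g(d_{-1})\bigr)w_0\otimes t^{l+m}.$$
Then it is a matter of checking that the two polynomials in $d_{-1}$ (both in $\C[d_{-1}]$) coincide after the substitution $x\mapsto a+l+m-x$ is performed on the first one. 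The cleanest bookkeeping is to substitute $y=a+l+m-d_{-1}$ throughout: the claim becomes a polynomial identity in $y$, in which the roles of $b'$ and $b_0'$, and of $\lambda$ and $\lambda^{-1}$, are exchanged symmetrically. I expect the cross terms (those linear in $m$ and the pure $d_{-1}$-multiplication terms) to match after a short rearrangement, with the $\lambda^{\pm m}$ terms matching because the substitution turns $f(d_{-1}-m)(mb_0'+d_{-1})$ into the analogue with $y$ and $b'$.

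The main obstacle is purely the algebraic reconciliation of the two polynomial expressions — keeping track of the shift $d_{-1}\mapsto a+l-d_{-1}$ versus $d_{-1}\mapsto a+l+m-d_{-1}$ (the extra $m$ comes from the $t^{l+m}$ grading) and confirming that the asymmetry between $b$ and $b'+1$ in the defining action of $\mathcal{L}$ is exactly absorbed by replacing the highest weight $b_0'$ with $b'$ and inverting $\lambda$. I would handle this by treating $w$ and $w_0$ merely as generators of free $\C[d_{-1}]$-modules and reducing everything to an identity in $\C[d_{-1}][m]$ (polynomials in two commuting indeterminates $d_{-1}$ and $m$ with coefficients depending on the fixed scalars $a,l,b',b_0',\lambda$); once written that way the verification is routine and can be stated as "a direct computation shows". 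No genuinely new idea beyond the operator identity of Sect.~3.2 is needed, so the writeup should be short.
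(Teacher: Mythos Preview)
Your approach is exactly the paper's: compute $d_m\cdot\phi(d_{-1}^kw\otimes t^l)$ and $\phi(d_m\cdot(d_{-1}^kw\otimes t^l))$ separately using the Verma-module identity $e^{mt}\tfrac{d}{dt}\cdot f(d_{-1})w=f(d_{-1}-m)(mb'+d_{-1})w$ from Sect.~3.2, then check the resulting polynomials in $d_{-1}$ agree. One bookkeeping slip to watch: in your displayed formulas you have interchanged the highest weights --- the factor $(m\cdot(\text{h.w.})+d_{-1})$ arises from the highest weight of the module being acted on (so $b'$ for $W$ and $b_0'$ for $W_0$), while the affine term carries the $b$-parameter of $\mathcal L$ (so $b_0'+1$ on the source and $b'+1$ on the target); once this is corrected the two sides match exactly as in the paper's computation.
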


\begin{proof} We need only to verify that
$\phi(d_m\cdot (d_{-1}^kw\otimes t^{l}))=d_m\cdot
\phi(d_{-1}^kw\otimes t^{l})$ for all $m,l\in\Z$ and $k\in\Z_+$. We
compute
$$\aligned&d_m\cdot \phi(d_{-1}^kw\otimes t^{l})\\ =&d_m\cdot (\lambda^{-l}(a+l-d_{-1})^kw_0\otimes t^{l})\\
=&\lambda^{-l} (\lambda^{-m}e^{mt}\frac{d}{dt}-d_{-1}+a+l+(1+b')m) (a+l-d_{-1})^kw_0)\otimes t^{l+m}\\
=&((\lambda^{-l-m}(a+l+m-d_{-1}) ^k(d_{-1}+b_0'm)\\
&+\lambda^{-l} (a+l+(1+b')m-d_{-1}) (a+l-d_{-1})^k)w_0)\otimes
t^{l+m},
\endaligned$$

$$\aligned &\phi(d_m\cdot (d_{-1}^kw\otimes t^{l}))\\
=&\phi((\lambda^me^{mt}\frac{d}{dt}-d_{-1}+a+l+m+b_0'm)d_{-1}^kw\otimes
t^{l+m})
\\
=&\phi((\lambda^m(d_{-1}-m)^k(d_{-1}+b'm)+(a+l+m-d_{-1}+b_0'm)d_{-1}^k)w\otimes t^{l+m})\\
=&\lambda^{-l-m}(\lambda^m(a+l-d_{-1})^k(a+l+m-d_{-1}+b'm)w_0\otimes
t^{l+m}\\
&+\lambda^{-l-m}(d_{-1}+b_0'm)(a+l+m-d_{-1})^k)w_0\otimes t^{l+m}\\
=&((\lambda^{-l-m}(a+l+m-d_{-1}) ^k(d_{-1}+b_0'm)\\
&+\lambda^{-l} (a+l+(1+b')m-d_{-1}) (a+l-d_{-1})^k)w_0)\otimes
t^{l+m}.
\endaligned$$
This completes the proof.\end{proof}

\begin{remark} In \cite{CM}, it was proved that $E_h(b,\gamma,p)$ and $E_{-h}(b,1-\gamma-p,p)$ are equivalent. The lemma above shows that in fact we have
 $E_h(b,\gamma,p)\cong E_{-h}(b,1-\gamma-p,p)$.\end{remark}

Now we are going to prove the main result in this section.

\begin{theorem}Let $\lambda, \lambda_0 \in \C^*, a, b, b', b'_0, a_i, b_i\in \C$ for $i=0,1,2$ with
$0\le {\rm Re} a,{\rm Re}
a_i <1$ and
 $b'b'_0b_1b_2\ne 0$. Let  $B\in \mathcal{O}_{\mathfrak{b}}$ and $W, W_0\in \mathcal{O}_{\mathfrak{W}}$
 be nontrivial
simple modules.

\begin{enumerate}[$($a$).$] \item
 Suppose that  $\mathcal{L}(W,\lambda, a,b)$ is simple. Then
$$\mathcal{L}(W,\lambda, a,b)\cong \mathcal{L}(W_0,\lambda_0,
a_0,b_0)$$ if and only if one of the following holds
\begin{enumerate}[$($i$).$]
\item
$W\cong W_0$, $\lambda=\lambda_0$, $a=a_0$
and $b=b_0$, or
\item $W, W_0$ are highest weight modules with highest weights $b', b'_0$ respectively, and
$\lambda_0=\lambda^{-1}, a=a_0, b=b'_0+1, b_0=b'+1$.
\end{enumerate}  \item For $i=0,1$, we have
$\mathcal{L}_i(a_1,b_1)\cong \mathcal{L}_i(a_2,b_2)$ if and only if
$a_1=a_2$, and $b_1=b_2$.
 \item The modules $\mathcal{L}(W,\lambda, a,b), \mathcal{N}(B,a_0),
\mathcal{L}_0(a_1,b_1), \mathcal{L}_1(a_2,b_2)$ are pairwise
non-isomorphic.
\end{enumerate}
\end{theorem}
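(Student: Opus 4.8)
The strategy is to extract isomorphism invariants from the module structure and match them up. First I would observe that any $\Vir$-module isomorphism between two of our modules must preserve the $d_0$-weight-space decomposition (up to a shift of $a$ by an integer, which we have normalized away by assuming $0\le \mathrm{Re}\,a<1$), so an isomorphism forces the $a$-parameters to agree in all cases where both sides are weight modules of the given form. The deeper point is to recover $\lambda$ and the pair $(W,b)$ from intrinsic data. The key tool is Proposition \ref{Key-computation}: in $\mathcal{L}(W,\lambda,a,b)$, acting by $d_m$ on a fixed weight vector and varying $m$ produces an expression of the form $\sum \lambda^m f_j(m) v_j + (\text{polynomial in }m)$, and the "exponential rate" $\lambda$ is therefore an invariant — more precisely, $\lambda$ together with $\lambda^{-1}$ and $1$ (the latter coming from the $-\frac{d}{dt}$ and the scalar terms) are the only rates that appear, so $\{\lambda,\lambda^{-1}\}$ is determined. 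This already explains why (ii) allows $\lambda_0=\lambda^{-1}$.

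For part (a), the plan is: given an isomorphism $\psi:\mathcal{L}(W,\lambda,a,b)\to\mathcal{L}(W_0,\lambda_0,a_0,b_0)$, first deduce $a=a_0$ from weights, then deduce $\lambda_0\in\{\lambda,\lambda^{-1}\}$ from the rate analysis above. In the case $\lambda_0=\lambda$, I would show $\psi$ must send $W\otimes t^n$ to $W_0\otimes t^n$ "triangularly" with respect to the $d_{-1}$-filtration (using $\ord_{\mathfrak b}$ from Lemma \ref{char-O(w)}), induce a $\mathfrak W$-module map on the socles/associated graded, and conclude $W\cong W_0$ and then $b=b_0$ by comparing the subleading ($m^1$) terms, which carry the coefficient $b$. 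In the case $\lambda_0=\lambda^{-1}$, Lemma \ref{iso} already produces an isomorphism precisely when both modules are of highest-weight type with the stated parameter relations; I would argue conversely that a rate-reversing isomorphism is only possible when $W$ (hence $W_0$) is a highest weight module, because for non-highest-weight $W$ the "$\lambda^{-m}$ block" in (\ref{expand}) has strictly smaller degree in $m$ than the "$\lambda^m$ block" would need in the target — so the asymmetry between the two exponential blocks, visible in formulas (\ref{highest 1}) and (\ref{45}) versus (\ref{47}), obstructs the reversal unless $r=0$, i.e. $W$ is highest weight. Then matching parameters forces $b=b'_0+1$, $b_0=b'+1$, and invoking the converse direction (Lemma \ref{iso}) finishes.

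Part (b) is comparatively routine: $\mathcal{L}_i(a_1,b_1)$ and $\mathcal{L}_i(a_2,b_2)$ are $\mathcal{L}_i$ for highest weight modules with highest weights $b_1-1$, $b_2-1$ sitting inside $\mathcal{L}(W,-1,a_j,b_j)$ with $b_j=b'_j+1$; an isomorphism again forces $a_1=a_2$ (mod $\Z$, normalized), and then comparing the action — e.g. the coefficient of the highest power of $m$ in $d_m$ acting on $w_0\otimes t^k$, which involves $b'_j$, equivalently $b_j$ — forces $b_1=b_2$. For part (c), I would separate the four families by invariants that no isomorphism can cross: $\mathcal{N}(B,a_0)$ lies in $\mathcal{O}_{\mathfrak W}$-type data with $\lambda=1$ (the rate-$1$-only module, distinguished because $d_m$ acts without any genuine exponential $\lambda^m$, $\lambda\ne1$, component — contrast Case 1 of Theorem \ref{weight-simplicity}), whereas $\mathcal{L}(W,\lambda,a,b)$ simple forces $\lambda\ne1$; and $\mathcal{L}_0(a_1,b_1)$, $\mathcal{L}_1(a_2,b_2)$ have $\lambda=-1$ and, crucially, their weight supports are $a_1+2\Z$ and $a_2+1+2\Z$ respectively — cosets of $2\Z$, not all of $\Z$ — so they are never isomorphic to each other nor to any $\mathcal{L}(W,\lambda,a,b)$ (whose support is all of $a+\Z$) nor to $\mathcal{N}(B,a_0)$.

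\textbf{Main obstacle.} The hard part is the rate-reversing case $\lambda_0=\lambda^{-1}$ in part (a): one must show this genuinely cannot happen for non-highest-weight $W$, which requires carefully reading off the degrees in $m$ of the two exponential blocks in (\ref{expand}) and checking that an isomorphism would have to match a degree-$(r+s+2)$ block against one of degree $(2r+s+2)$ when $r>0$ — a contradiction — so that the reversal pins down $r=0$; everything else is bookkeeping with weights and leading coefficients.
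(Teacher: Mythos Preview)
Your overall strategy for (a) and (b) is the paper's strategy, but there are two concrete gaps.

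\textbf{Part (c): the support argument is wrong.} You assert that $\mathcal{L}_0(a_1,b_1)$ and $\mathcal{L}_1(a_2,b_2)$ have weight supports $a_1+2\Z$ and $a_2+1+2\Z$. This is false: by definition $\mathcal{L}_0(a,b')=\span\{d_1^i\cdot(w_0\otimes t^{2j}):i\ge0,\ j\in\Z\}$, and since $d_1^i\cdot(w_0\otimes t^{2j})\in W\otimes t^{2j+i}$ has $d_0$-weight $a+2j+i$, the support is all of $a+\Z$ (take $i\in\{0,1\}$ and vary $j$). The same holds for $\mathcal{L}_1$. So all four families in (c) have identical weight support, and your entire argument for (c) collapses. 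The paper instead runs the same $d_{l-m}d_m$ expansion (the analogue of (\ref{4.11})) across a putative isomorphism or embedding and reads off contradictions from the top coefficients: an isomorphism $\mathcal{L}(W,\lambda,a,b)\cong\mathcal{N}(B,a_0)$ forces $b=1$; an embedding $\mathcal{L}(W,\lambda,a,b)\hookrightarrow\mathcal{L}(W',-1,a,b_1+1)$ forces $\lambda=-1$ and then $b=b'+1$; an embedding of $\mathcal{N}(B,a_0)$ into $\mathcal{L}(W',-1,a,b_1+1)$ forces $b_1=0$.

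\textbf{Part (a): the rate-reversal obstruction is misidentified.} Your ``degree asymmetry'' claim does not hold: by (\ref{highest 1}) and (\ref{45}) the $\lambda^m$-block and the $\lambda^{-m}$-block in (\ref{expand}) have the \emph{same} top degree $r+s+2$ in $m$, so swapping $\lambda\leftrightarrow\lambda^{-1}$ costs nothing at that level. The degree $2r+s+2$ you invoke belongs to the rate-$1$ block (\ref{47}), which is a third rate and is never matched against either exponential block. The paper's actual device in the non-highest-weight case ($r>0$) is to compare precisely those rate-$1$ top coefficients (equations (\ref{7})--(\ref{8})) across the isomorphism to obtain a relation of the form $\phi_{l+n}(d_r^2 w)=d_r^2\phi_n(w)$, showing that the set $M=\{v\in W:\phi_n(v)=\phi_0(v)\text{ for all }n\}$ is nonzero; since $M$ is then a $\mathfrak{W}$-submodule of the simple module $W$, one gets $M=W$ and case (i) follows. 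Your proposed mechanism would need to be replaced by this one.
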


\begin{proof} (a). The sufficiency is trivial. Now suppose that $\phi:L(W,\lambda,
a,b)$ $\rightarrow L(W_0,\lambda_0, a_0,b_0)$ is a module
isomorphism. It is clear that $a=a_0$. Since $\mathcal{L}(W,\lambda,
a,b)$ is simple,  $\lambda, \lambda_0,   b,   b_0$ are different
from $1$. Denote $r=\ord_{\mathfrak{b}}(W)$,
$r'=\ord_{\mathfrak{b}}(W_0)$ and $\phi(v\otimes
t^n)=\phi_n(v)\otimes t^n$. Then each $\phi_n: W\to W_0$ is a vector
space isomorphism.

Let $M=\{v\in W| \phi_n(v)=\phi_0(v),\text{ for all } n\in \Z\}$. If
$M\ne0$,  for any $w\in M, l,m\in\Z$ we have
$$\phi_l((\lambda^m\sum_{j=0}^\infty\frac{m^j}{j!}d_{j-1}-d_{-1}+a+l-m+bm) w)\otimes
t^{l}$$
$$\aligned =&\phi(((\lambda^m\sum_{j=0}^\infty\frac{m^j}{j!}d_{j-1}-d_{-1}+a+l-m+bm)
w)\otimes t^{l})\\
=&\phi(d_m\cdot (w\otimes t^{l-m}))\\ =&d_m\cdot \phi(w\otimes
   t^{l-m})\\
   =&(\lambda_0^m\sum_{j=0}^\infty\frac{m^j}{j!}d_{j-1}-d_{-1}+a+l-m+b_0m)
   \phi_0(w)\otimes t^{l},\endaligned$$
   yielding that $$\lambda=\lambda_0, b=b_0,
   \phi_l(d_j w)=d_j\phi_0(w),\,\forall   l\in \Z, j\ge -1, w\in M.$$ Thus $M$ is a
   $\mathfrak{W}$-submodule of $W$. Then $M=W$ and $\phi_l=\phi_0$ is an
   $\mathfrak{W}-$module isomorphism. So (i) holds in this case. Thus we only need to prove that $M\ne 0$ or (ii).

\

For any $w\in\Soc_{\mathfrak{b}}(W), l,m,n\in\Z$, similar to
(\ref{4.2}) we have
  \begin{equation}\label{4.11}\aligned& (\lambda_0^l(e^{(l-m)t}\frac{d}{d
t})(e^{mt}\frac{d}{d t})+(-\frac{d}{d t}+a+(l-m)b_0+m+n)\\
&(-\frac{d}{d t}+a+mb_0+n))\phi_n(w) \otimes
t^{l+n}\\&+\lambda_0^{-m}\lambda_0^{l} (-\frac{d}{d
t}+a+m(b_0-1)+n+l)(e^{(l-m)t}\frac{d}{d t})\phi_n(w) \otimes
t^{l+n}\\
&+\lambda_0^m(-\frac{d}{d t}+a+lb_0+(1-b_0)m+n) (e^{mt}\frac{d}{d
t})\phi_n(w) \otimes t^{l+n}\\= & ((\lambda_0^{l-m}
e^{(l-m)t}\frac{d}{d t}-\frac{d}{d t}+a+(l-m)b_0+m+n)\\&(\lambda_0^m
e^{mt}\frac{d}{d t}-\frac{d}{d t}+a+mb_0+n)\phi_n(w))\otimes
t^{n+l}\\ =&d_{l-m}d_m\cdot (\phi_n(w)\otimes
   t^{n})\\ =&\phi(d_{l-m}d_m\cdot (w\otimes
   t^{n+l}))\\= & \phi_{n+l}(((\lambda^{l-m}
e^{(l-m)t}\frac{d}{d t}-\frac{d}{d t}+a+(l-m)b+m+n)\\&(\lambda^m
e^{mt}\frac{d}{d t}-\frac{d}{d t}+a+mb+n)w)\otimes
t^{n+l}\endaligned\end{equation}
$$\aligned
  =& \phi_{l+n}(((\lambda^l(e^{(l-m)t}\frac{d}{d
t})(e^{mt}\frac{d}{d t})+(-\frac{d}{d t}+a+(l-m)b+m+n)\\
&(-\frac{d}{d t}+a+mb+n))w )\otimes
t^{l+n}\\&+\lambda^{-m}\lambda^{l}\phi_{l+n}( (-\frac{d}{d
t}+a+m(b-1)+n+l)(e^{(l-m)t}\frac{d}{d t})w )\otimes
t^{l+n}\\
&+\lambda^m\phi_{l+n}((-\frac{d}{d t}+a+lb+(1-b)m+n)
(e^{mt}\frac{d}{d t})w )\otimes t^{l+n}.\endaligned$$  We can write
$\phi_n(w)=\sum_{i=1}^{s'} d_{-1}^i w'_i $ with  $w'_i\in
\Soc_{\mathfrak{b}}(W')$,   and $w'_{s'}\ne 0$.

   Similarly as we have done in (\ref{expand})-(4.6), we may write both sides
   of (\ref{4.11})  as a linear combination of
   $\{m^i, \lambda^{\pm i} m^i, \lambda_0^{\pm i}m^i\}.$ By
   comparing with the highest degree of $m^i$ we see
   that $r=r'$. Further comparing with the highest degree of
   $\lambda^{\pm m} m^i$, from  the analogues of (\ref{highest 1})-(4.6),
   we have $s'=0$, and $\lambda=\lambda_0$ or
   $\lambda=\lambda_0^{-1}$. In particular, we have
   $\phi(\Soc_{\mathfrak{b}}(W)\otimes \C[t,t^{-1}])\subset \Soc_{\mathfrak{b}}(W_0)\otimes
   \C[t,t^{-1}]$, i.e., $\phi_n(\Soc_{\mathfrak{b}}(W))=
   \Soc_{\mathfrak{b}}(W_0)$ for any $n\in\Z$.

Now we can write (5.1) as
\begin{equation}\label{expand2}
\sum_{i=0}^{2r+2} m^i v_{1,i}^{(l,n)}\otimes
t^{l+n}+\sum_{i=0}^{r+2} \lambda_0^{-m} m^i v_{\frac{1}{\lambda_0},
i}^{(l,n)}\otimes t^{l+n}+\sum_{i=0}^{r+2}\lambda_0^m m^i
v_{\lambda_0,i}^{(l,n)}\otimes t^{l+n}\end{equation}
$$\aligned
=&\sum_{i=0}^{2r+2} m^i \phi_{n+l}(w_{1,i}^{(l,n)})\otimes
t^{l+n}+\sum_{i=0}^{r+2} \lambda^{-m} m^i
\phi_{n+l}(w_{\frac{1}{\lambda}, i}^{(l,n)})\otimes t^{l+n}\\
&+\sum_{i=0}^{r+2}\lambda^m m^i
\phi_{n+l}(w_{\lambda,i}^{(l,n)})\otimes t^{l+n},
\endaligned$$
where $v_{1,i}^{(l,n)},v_{\frac{1}{\lambda_0},
i}^{(l,n)},v_{\lambda_0,i}^{(l,n)}\in W_0$,
$w_{1,i}^{(l,n)},w_{\frac{1}{\lambda},
i}^{(l,n)},w_{\lambda,i}^{(l,n)}\in W$ are independent  of $m$. In
particular,
\begin{equation}\label{3}v_{\lambda_0,
r+2}^{(l,n)}=\frac{(1-b_0)}{(r+1)!} d_r \phi_n(w), \forall l, n\in
\Z,\end{equation}
\begin{equation}\label{4}w_{\lambda,
r+2}^{(l,n)}=\frac{(1-b)}{(r+1)!} d_r w, \forall l, n\in
\Z,\end{equation}
\begin{equation}\label{5}v_{\frac1{\lambda_0},
r+2}^{(l,n)}=\frac{(-1)^r(1-b_0)\lambda_0^l}{(r+1)!} d_r \phi_n(w),
\forall l, n\in \Z,\end{equation}
\begin{equation}\label{6}w_{\frac1{\lambda},
r+2}^{(l,n)}=\frac{(-1)^r(1-b)\lambda^l}{(r+1)!} d_r w, \forall l,
n\in \Z.\end{equation}

And if $r>0$,
\begin{equation}\label{7}v_{1,
2r+2}^{(l,n)}=\frac{(-1)^{r+1} \lambda_0^l}{((r+1)!)^2}d_r^2
\phi_n(w), \forall l, n\in \Z,\end{equation}
\begin{equation}\label{8}w_{{1,
2r+2}}^{(l,n)}=\frac{(-1)^{r+1} \lambda^l}{((r+1)!)^2}d_r^2 w,
\forall l, n\in \Z.\end{equation}

 \

   {\bf Case 1}. $\lambda\ne -1$.

   If $W$ is not a highest weight module, i.e., $r\ge 1$,  by comparing with
   the coefficient of $m^{2r+2}$ in   (5.2) and using (5.7) and (5.8),
   we have $\phi_{l+n}(d_r^2
   w)=d_r^2\psi_{n}(w),$  for all $n,l\in \Z$, and $w\in
   \Soc_{\mathfrak{b}}(W)$. Thus $M\ne 0$ in this case.

   Now suppose that $W$ is a highest weight module. So $r=r'=0$.

   {\bf Subcase 1.1}. $\lambda=\lambda_0$

   In this case, from (5.3) and (5.4) we have
 $$(1-b_0)d_0 \phi_{n}(w)=(1-b)\phi_{l+n}(d_0 w),$$ for all $l,n\in \Z, w\in
   \Soc_{\mathfrak{b}}(W)$. Thus $M\ne 0$.

  {\bf Subcase 1.2}. $\lambda=\lambda_0^{-1}$

   In this case, by computing the coefficients of $\lambda^m m^2$ in  (5.2) and using (5.4) and (5.5),
    we have $\lambda^l(1-b)\phi_{l+n}(d_0 w)=(1-b_0) d_0
   \phi_{n}(w)$ for all $l\in\Z$, where $w$, $\phi_{n}(w)$ are the highest weight vectors in $W$ and $W_0$
   respectively. We may assume that $d_0w=b'w$ and $d_0(\phi_{n}(w))=b'_0\phi_{n}(w)$.
   Then $b'b'_0\ne0$ because of the simplicity of the modules.
   We see that
   $\lambda^l(1-b)b'\phi_{l+n}( w)=(1-b_0) b'_0
   \phi_{n}(w)$.
   By taking $l=0$ we deduce that $(1-b)b'=(1-b_0)b'_0\ne0$. Thus \begin{equation}\label{9}\phi_l(w)=\lambda^{-l}
   \phi_0(w), \,\,\forall\,\,l\in\Z.\end{equation} We compute  $\phi(d_m\cdot (w\otimes
   t^{l-m}))=d_m\cdot \phi(w\otimes t^{l-m})$:
$$d_m\cdot \phi(w\otimes
t^{l-m})=\lambda^{m-l}d_m\cdot (\phi_0(w)\otimes t^{l-m})$$
$$=\lambda^{m-l}(\lambda^{-m}(d_{-1}+mb_0')-d_{-1}+a+l-m+b_0m)\phi_0(w)\otimes
t^l$$
$$=(\lambda^{-l}(d_{-1}+mb_0')+\lambda^{m-l}(-d_{-1}+a+l+(b_0-1)m)\phi_0(w)\otimes
t^l,$$
$$\phi(d_m\cdot (w\otimes
   t^{l-m}))=\phi((\lambda^m(d_{-1}+mb')-d_{-1}+a+l +(b-1)m)w\otimes
t^l)$$
$$=(\lambda^m-1)\phi_l(d_{-1}w)\otimes
t^l+(m\lambda^mb'+a+l+(b-1)m)\lambda^{-l}\phi_0(w)\otimes t^l.$$
Comparing the coefficients of $m$, $m\lambda^m$ we
obtain that
\begin{equation}%\label{10}\aligned \phi_l(d_{-1}w)&=\lambda^{-l}(-d_{-1}+a+l)\phi_0(w),\\
b'-b_0+1=b_0'-b+1=0. \end{equation}
%In particular, $\phi_0(d_{-1}w)=(a-d_{-1})\phi_0(w).$
Thus (ii) holds in this case.

 \

 {\bf Case 2}. $\lambda=\lambda_0=-1$.

Note that
   $\phi(\Soc_{\mathfrak{b}}(W)\otimes \C[t,t^{-1}])\subset \Soc_{\mathfrak{b}}(W_0)\otimes
   \C[t,t^{-1}]$.
 If $W$ is not a highest weight module, again by comparing with
   the coefficient of $m^{2r+2}$ in (5.2) and using (5.3) and (5.4),
   we have $\phi_{l+n}(d_r^2
   w)=d_r^2\psi_{n}(w),$ for all $n,l\in \Z$, and $w\in
   \Soc_{\mathfrak{b}}(W)$. Thus $M\ne 0$ in this case.

Now suppose that $W$ is a highest weight module  the highest weight vector $w$. So $r=r'=0$. Let
where $b',b_0'$ be highest weight of $W,W_0$ respectively. Then $b'b'_0\ne0$ because of the simplicity of the modules. From
(5.3)-(5.6), we may deduce that $\phi_{2k}(w)=\phi_0(w)$,
$\phi_{2k+1}(w)=\phi_1(w)$.
We compute
$\phi(d_{2m+1}\cdot (w\otimes
  t^{i_0-2m+2l}))=d_{2m+1}\cdot(\phi_{i_0} (w)\otimes
  t^{i_0-2m+2l})$:
$$\aligned &d_{2m+1}\cdot(\phi_{i_0} (w)\otimes
  t^{i_0-2m+2l})\\=&(-2d_{-1}+(2m+1)(b_0-b_0')+a+i_0-2m+2l)\phi_{i_0}(w)\otimes
  t^{i_0+2l+1},\endaligned$$
  $$\aligned &\phi(d_{2m+1}\cdot (w\otimes
  t^{i_0-2m+2l}))\\=&\phi(-2d_{-1}+(2m+1)(b-b')+a+i_0-2m+2l)w\otimes
  t^{i_0+2l+1})\\=&((2m+1)(b-b')+a+i_0-2m+2l)\phi_{i_0+1}(w)\otimes
  t^{i_0+2l+1})\\ &-2\phi(d_{-1}(w)\otimes
  t^{i_0+2l+1}).\endaligned$$
Comparing the coefficient  of $m$,  we
obtain
$$(b_0-b_0'-1)\psi_{i_0}(w)=(b-b'-1)\psi_{i_0+1}(w), \forall i_0\in
\Z,$$
 Since $\mathcal{L}(W,\lambda, a,b)$ is simple, $b-b'-1\ne0$. By
taking $i_0=0$ and $1$ we obtain that  $b-b'-1=\pm(b_0-b'_0-1)$.
Thus we have either
$$\phi_{i}(w)=\phi_{0}(w), \forall i\in \Z{\text{ or }}$$ $$\phi_i(w)=(-1)^i \phi_0(w),\forall i\in
\Z.$$  For the first case we have $ M\ne 0$. Now we consider the
second case, i.e., $\phi_i(w)=(-1)^i \phi_0(w)$ for all $i\in\Z$. By
a same argument  as in Subcase 1.2, we can prove that (ii) holds in  this case.

\

(b). The sufficiency is obvious. We need only  to consider the case
$\mathcal{L}_0(a_1,b_1)\cong \mathcal{L}_0(a_2,b_2)$. Now suppose
$\psi:\mathcal{L}_0(a_1,b_1)\rightarrow \mathcal{L}_0(a_2,b_2)$ is
an isomorphism.  Then it is clear that $a_1=a_2$. Denote
$\psi(v\otimes t^n)=\psi_n(v)\otimes t^n$. Again we have
(\ref{4.11}). We may consider  $\lambda=\lambda_0=-1$, $r=r'=0$ as
in the argument in (a). From (5.3)-(5.6), we have
$\psi_{2k}(w)=\psi_0(w)$, where $w$, $\psi_0(w)$ are the highest
weight vectors.  For any $k\in\Z$ we compute  $\psi(d_{2k} \cdot
(w\otimes t^{0}))=d_{2k}\cdot \psi(w\otimes t^{0})$:
$$\psi(d_{2k} \cdot (w\otimes t^{0}))=(2k b_1+a_1+2k(b_1+1))\psi_0(w)\otimes
t^{2k},$$
$$d_{2k} \cdot \psi((w\otimes t^{0}))=(2k b_2+a_1+2k(b_2+1))\psi_0(w)\otimes
t^{2k},$$ yielding  that $b_1=b_2$.

\

(c). Since  $\mathcal{L}(W,\lambda, a,b)$ is simple, then
$\lambda\ne1$ and $b\ne1$. Let $\phi:  \mathcal{L}(W,\lambda,
a,b)\to \mathcal{N}(B,a_0)$ be a module isomorphism. We know that
$a=a_0$. Since $W$ is nontrivial, the action of $\C[d_{-1}]$ on $W$
is torsion-free. Let $r=\ord_\mathfrak{b}(\Soc_\mathfrak{b}(W))$ and
$r'=\ord_\mathfrak{b}(B)$. We also define $\phi_n(w\otimes
t^n)=\phi_n(w)\otimes t^n$ for any $w\in \Soc_\mathfrak{b}(W)$ and
any $n\in\Z$. Then $\phi_n:W\to B$ is a vector space isomorphism. We
can have a similar equation as in (\ref{4.11}), while one side comes
from $d_{l-m}d_m(v(n))$ in the proof of Lemma 3 in \cite{LLZ}. From
(5.3) and (5.4), we see that $b=1$ which is impossible. %Consider
So $\mathcal{L}(W,\lambda, a,b)$ and $\mathcal{N}(B,a_0)$ cannot be
isomorphic.

To consider the isomorphisms between $\mathcal{L}(W,\lambda, a,b)$
and $\mathcal{L}_0(a_1,b_1)$ (or $\mathcal{L}_1(a_1,b_1)$), we let
$\phi:  \mathcal{L}(W,\lambda, a,b)\to \mathcal{L}(W',-1, a,b_1+1)$
be a nonzero one-to-one module homomorphism where $W'$ is the
highest weight $\mathfrak{W}$-module with highest weight $b_1$. Note
that $\mathcal{L}_0(a_1,b_1)$ and $\mathcal{L}_1(a_1,b_1)$ are the
only simple submodules of $\mathcal{L}(W',-1, a,b_1+1)$. We also
define $\phi_n(w\otimes t^n)=\phi_n(w)\otimes t^n$ for any $w\in
\Soc_\mathfrak{b}(W)$ and any $n\in\Z$. Then $\phi_n:W\to W'$ is a
one-to-one vector space   homomorphism.  As in the argument after
(5.1) we deduce that $\lambda=-1$. By a similar argument to Case 2
in the proof for (a), we can see that $W$ is also a highest weight
module. Continuing the argument as in Case 2 of the proof of (a) we see that $b=b'+1$ which contradicts the simplicity of $\mathcal{L}(W,\lambda, a,b)$. So there is no such $\phi$ exists. Thus
$\mathcal{L}(W,\lambda, a,b)$ and $\mathcal{L}_0(a_1,b_1)$ (or
$\mathcal{L}_1(a_1,b_1)$) cannot be isomorphic.

At last we consider isomorphisms between $\mathcal{N}(B,a_0)$ and
$\mathcal{L}_0(a_1,b_1)$ (or $\mathcal{L}_1(a_1,b_1)$). Let $\phi:
\mathcal{N}(B,a_0)\to \mathcal{L}(W',-1, a,b_1+1)$ be a nonzero
one-to-one module homomorphism  where $W'$ is the highest weight
$\mathfrak{W}$-module with highest weight $b_1$. We also define
$\phi_n(w\otimes t^n)=\phi_n(w)\otimes t^n$ for any $w\in B$ and any
$n\in\Z$. Then $\phi_n:B\to W'$ is a one-to-one vector space
homomorphism. We can have a similar equation as in (\ref{4.11}).
From (5.3) and (5.4), we deduce that $b_1=0$ which is impossible. So
$\mathcal{N}(B,a_0)$ and $\mathcal{L}_0(a_1,b_1)$ (or
$\mathcal{L}_1(a_1,b_1)$) cannot be isomorphic.
\end{proof}

\section{Virasoro modules $\mathcal{L}(W,\lambda, a,b)$ are new}

We need only to compare our simple Virasoro modules
$\mathcal{L}(W,\lambda, a,b)$ and $\mathcal{L}_0( a,b)$ with the simple Virasoro modules
obtained in \cite{CGZ}. Let us first recall the modules in \cite{CGZ}.

Let $U:=U(\mathfrak{V})$ be the universal enveloping algebra of the
Virasoro algebra $\mathfrak{V}$. For any $\dot c, h\in \C$, let
$I(\dot c,h)$ be the left ideal of $U$ generated by the set $$
\bigl\{d_{i}\bigm|i>0\bigr\}\bigcup\bigl\{d_0-h\cdot 1, c-\dot
c\cdot 1\bigr\}. $$ The Verma module with highest weight $(\dot c,
h)$ for $\mathfrak{V} $ is defined as the quotient  $\bar V(\dot
c,h):=U/I(\dot c,h)$. It is a highest weight module of $\mathfrak{V}
$ and has a basis consisting of all vectors of the form $$
d_{-i_1}d_{-i_2}\cdots d_{-i_k}v_{h};\quad k\in{\N}\cup\{0\},
i_{j}\in\N, i_{k}\geq\cdots\geq i_2\geq i_1>0.
$$ Then we have the {\it simple highest weigh module} $ V(\dot c,h)=\bar V(\dot c,h)/J$
where $J$ is the maximal proper submodule of $\bar V(\dot c,h)$.

\begin{theorem} Let $\lambda \in \C^*, a, b,\dot c, h, a_1, b_1\in
\C$,
and let $W\in \mathcal{O}_{\mathfrak{W}}$ be nontrivial simple. Then
simple modules $\mathcal{L}(W,\lambda, a,b), \mathcal{L}_0(a,b),
\mathcal{L}_1(a,b)$ are not isomorphic to any simple submodules of $
V(\dot c,h)\otimes A'_{a_1,b_1}$
\end{theorem}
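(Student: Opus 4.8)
The plan is to exploit the structural dichotomy that has already been exhibited for the two classes of modules. The modules $V(\dot c,h)\otimes A'_{a_1,b_1}$ from \cite{CGZ} carry a natural increasing filtration coming from the height filtration on the first factor: the submodules $V(\dot c,h)_{\le n}\otimes A'_{a_1,b_1}$ (weights at height $\le n$ above the highest weight), and a simple submodule of the tensor product, when restricted to an appropriate positive part of $\mathfrak{V}$, will contain vectors annihilated by all $d_i$ with $i$ large. More precisely, for any fixed weight space of such a simple submodule, every element is killed by $d_i$ for $i$ sufficiently large, because the highest weight module $V(\dot c,h)$ has that property and $A'_{a_1,b_1}$ is a rank-one intermediate series module on which every $d_i$ acts bijectively. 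First I would make this precise: in $V(\dot c,h)\otimes A'_{a_1,b_1}$, for each homogeneous vector $x$ there is $N$ with $d_i x = 0$ for $i\ge N$, and this descends to any submodule. Contrast this with $\mathcal{L}(W,\lambda,a,b)$: I would show that \emph{no nonzero vector} of $\mathcal{L}(W,\lambda,a,b)$ is killed by $d_i$ for all large $i$ when $\lambda\ne 1$. This follows from the action formula (3.2) together with Proposition \ref{Key-computation} — exactly the mechanism used in Lemma \ref{lemma-19} — since a relation $d_i v = 0$ for all large $i$ forces, after expanding $d_i(v\otimes t^j)$ in powers of $\lambda^i i^k$, a contradiction unless $v=0$.

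The argument then proceeds as follows. Suppose $\phi:\mathcal{L}(W,\lambda,a,b)\to S$ is an isomorphism onto a simple submodule $S\subseteq V(\dot c,h)\otimes A'_{a_1,b_1}$. Pick any nonzero homogeneous $u\in S$; then $d_i u = 0$ for $i\ge N$ for some $N$, by the filtration argument above. Transporting back through $\phi$, the vector $v=\phi^{-1}(u)$ is a nonzero homogeneous element of $\mathcal{L}(W,\lambda,a,b)$ with $d_i v = 0$ for all $i\ge N$. But since $\mathcal{L}(W,\lambda,a,b)$ is assumed simple we have $\lambda\ne 1$ by Lemma \ref{lambda=1}, and the computation with Proposition \ref{Key-computation} sketched above shows $v=0$, a contradiction. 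Hence no such $\phi$ exists. The same reasoning applies verbatim to $\mathcal{L}_0(a,b)$ and $\mathcal{L}_1(a,b)$: these are submodules of $\mathcal{L}(W,-1,a,b'+1)$ with $\lambda=-1\ne 1$, so the same "no vector is eventually annihilated" property holds for them — one only needs to check it for a spanning set such as $\{d_1^i\cdot(w_0\otimes t^j)\}$, using the explicit action in Lemma \ref{b=b'+1}.

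The main obstacle, and the step requiring care, is the claim that in $V(\dot c,h)\otimes A'_{a_1,b_1}$ every homogeneous vector is annihilated by all sufficiently large $d_i$ \emph{and that this passes to arbitrary submodules}. The first part is immediate from the coproduct: $d_i(x\otimes y) = d_i x\otimes y + x\otimes d_i y$, and for large $i$ the term $d_i x$ vanishes (highest weight) while $x\otimes d_i y$ lies in a fixed finite-dimensional-per-weight-space piece, so one must check that finitely many such terms cannot persist — this is where one uses that $A'_{a_1,b_1}$ is one-dimensional in each weight and that the resulting conditions, read across varying $i$ via the $\lambda^i i^k$ expansion, force vanishing. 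Actually the cleaner route is: a homogeneous vector $x\otimes y$ of $V(\dot c,h)\otimes A'_{a_1,b_1}$ has $x$ a finite sum of PBW monomials of bounded height, so $d_i(x\otimes y)=x\otimes d_i y$ for $i$ past that bound, and iterating, $d_{i_1}\cdots d_{i_m}(x\otimes y)$ eventually falls into $V(\dot c,h)_{\le 0}\otimes A'_{a_1,b_1}$ where $d_i$ acts as $d_i$ on the highest-weight line tensored with the intermediate series, hence is $0$ for $i$ large — but this needs the genuine statement I would isolate as a short lemma: each weight space of $V(\dot c,h)\otimes A'_{a_1,b_1}$, hence of any submodule, is a \emph{locally finite, eventually-annihilated} module for $\{d_i:i\ge 1\}$ in the sense of Condition A. Granting that lemma, the rest is the two-line contradiction above, and the theorem follows.
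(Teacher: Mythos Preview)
Your key lemma is false, and the gap is fatal for this approach. You claim that every homogeneous vector of $V(\dot c,h)\otimes A'_{a_1,b_1}$ is annihilated by $d_i$ for all sufficiently large $i$ (Condition~A). But take the highest weight vector $v_h\in V(\dot c,h)$ and any basis vector $v_j\in A'_{a_1,b_1}$: for every $i>0$ we have
\[
d_i(v_h\otimes v_j)=d_iv_h\otimes v_j+v_h\otimes d_iv_j
= v_h\otimes (a_1+j+b_1 i)\,v_{j+i},
\]
which is nonzero for all large $i$ whenever $b_1\ne 0$ (and even when $b_1=0$ it is nonzero unless $a_1+j=0$). So the tensor product does \emph{not} satisfy Condition~A, and neither does any simple submodule containing $v_h\otimes v_j$ (and by \cite{CGZ} every simple submodule does contain such a vector). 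The sentence ``hence is $0$ for $i$ large'' in your sketch is precisely where the argument breaks: $d_i$ does not annihilate the highest-weight line tensored with the intermediate series.

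What actually distinguishes the two families is not ``eventually annihilated versus never annihilated,'' but rather \emph{polynomial versus exponential} dependence on the index. On $v_h\otimes v_j$ the products $d_{l-m}d_m$ act, as a function of $m$, by a polynomial of degree at most $2$; on $w\otimes t^k\in\mathcal{L}(W,\lambda,a,b)$ the same products involve genuine $\lambda^{\pm m}$ terms (cf.\ (4.2)). The paper exploits this by applying the third finite-difference operator
\[
X_{l,m}=d_{l-m-3}d_{m+3}-3d_{l-m-2}d_{m+2}+3d_{l-m-1}d_{m+1}-d_{l-m}d_m,
\]
which kills any degree-$\le 2$ polynomial in $m$ and hence annihilates $v_h\otimes v_j$ for $m>0,\ l>m+3$, while on $w\otimes t^k$ it leaves a nonzero term proportional to $(\lambda-1)^3(\lambda^{l-m-3}-\lambda^m)d_{-1}^2w$. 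Your intuition that Proposition~\ref{Key-computation} should separate the two classes is correct in spirit; it just has to be implemented via an operator like $X_{l,m}$ rather than via Condition~A.
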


\begin{proof} Let us introduce the operator in $U(\mathfrak{V})$:
$$X_{l,m}=d_{l-m-3}d_{m+3}-3d_{l-m-2}d_{m+2}+3d_{l-m-1}d_{m+1}-d_{l-m}d_{m}, \forall l,m\in\Z.$$

Let $v_1$ be the highest weight vector of $V(\dot c,h)$. From
\cite{CGZ}, there is a nonzero vector in any simple submodules of $
V(\dot c,h)\otimes A'_{a_1,b_1}$ of the form $v_1\otimes v_2$ for
some weight vector
  $v_2\in A'_{a_1,b_1}$.
From the proof of Theorem 7 in \cite{LLZ}, we know that
$$X_{l,m} (v_1\otimes v_2)= v_1\otimes \omega_{l,m}^{(3)}v_2=0, \forall\ m>0, l>m+3.$$
For any nonzero weight vector $w\otimes t^k\in
\mathcal{L}(W,\lambda, a,b),$ or $ \mathcal{L}_0(a,b),$ or $
\mathcal{L}_1(a,b)$, where $w\in W, k\in \Z$, one can compute that
$$X_{l,m} (w\otimes
t^k)\in (\lambda-1)^3(\lambda^{l-m-3}-\lambda^m)d_{-1}^2w\otimes
t^{l+k}+(d_{-1}U(\mathfrak{b})+U(\mathfrak{b}))w\otimes t^{l+k}.$$
Recall that $W\cong
\Ind_{\mathfrak{b}}^{\mathfrak{W}}(\Soc_{\mathfrak{b}}W)=\C[d_{-1}]\otimes
(\Soc_{\mathfrak{b}}W)$. So $X_{l,m} (w\otimes t^k)$ is nonzero for
$l>m+3$ with $(\lambda-1)^3(\lambda^{l-m-3}-\lambda^m)\ne 0$. Thus
the theorem follows.
\end{proof}

\noindent {\bf Acknowledgement.} The second author is partially
supported by NSF of China (Grant 11271109) and NSERC. Part of the
work in this paper was carried out when both authors were visiting
Institute of Mathematics, Chinese Academy of Sciences, Beijing,
China and Chern Institute of Mathematics, Nankai University,
Tianjin, China in the summer of 2012.

\vspace{1cm}

\noindent  R.L.: Department of Mathematics, Soochow university,
Suzhou 215006, Jiangsu, P. R. China.
 Email: rencail@amss.ac.cn

\vspace{0.2cm} \noindent K.Z.: Department of Mathematics, Wilfrid
Laurier University, Waterloo, ON, Canada N2L 3C5,  and College of
Mathematics and Information Science, Hebei Normal (Teachers)
University, Shijiazhuang, Hebei, 050016 P. R. China. Email:
kzhao@wlu.ca

\end{document}